\journalname{CALCOLO}
\DeclareMathOperator*{\argmax}{arg\,max}
\begin{document}
	
	\title{The global extended-rational Arnoldi method \\for matrix function approximation}
	
	%\subtitle{Do you have a subtitle?\\ If so, write it here}
	
	%\titlerunning{Short form of title}        % if too long for running head
	
	\author{A. H. Bentbib \and M. El Ghomari  \and K. Jbilou   
	}
	
	%\authorrunning{Short form of author list} % if too long for running head
	
	\institute{A. H. Bentbib \at Facult\'e des Sciences et Techniques-Gueliz, Laboratoire de Math\'ematiques Appliqu\'ees et Informatique, Morocco. \email{a.bentbib@uca.ma}          
		\and 
		M. El Ghomari \at Facult\'e des Sciences et Techniques-Gueliz, Laboratoire de Math\'ematiques Appliqu\'ees et Informatique, Morocco.  \email{m.elghomari10@gmail.com} 
		\and 
		K. Jbilou (corresponding author) \at Universit\'e du Littoral, C\^ote d'Opale, batiment H. Poincarr\'e,\\ 
		50 rue F. Buisson,  F-62280 Calais Cedex, France. \email{jbilou@lmpa.univ-littoral.fr} 
	}
	
	\date{Received: date / Accepted: date}
	% The correct dates will be entered by the editor

\maketitle

\begin{abstract}
The numerical computation of matrix functions such as  $f(A)V$, where $A$ is an  $n\times n$ large and sparse square matrix, $V$ is an $n \times p$ block with $p\ll n$ and $f$ is a nonlinear matrix function, arises in various applications such as network analysis ($f(t)=exp(t)$ or $f(t)=t^3)$, machine learning $(f(t)=log(t))$, theory of quantum chromodynamics $(f(t)=t^{1/2})$, electronic structure computation, and others. In this work, we propose the use of  global extended-rational Arnoldi method for computing approximations of such expressions. The derived method projects the initial problem onto an global extended-rational Krylov subspace $\mathcal{RK}^{e}_m(A,V)=\text{span}(\{\prod\limits_{i=1}^m(A-s_iI_n)^{-1}V,\ldots,(A-s_1I_n)^{-1}V,V$ $,AV,
\ldots,A^{m-1}V\})$ of a low dimension. An adaptive procedure for the selection of shift parameters $\{s_1,\ldots,s_m\}$ is given. The proposed method is also applied to solve parameter dependent systems. Numerical examples are presented to show the performance of the global extended-rational Arnoldi for these problems.
\end{abstract}

\begin{keywords}
Extended-rational Krylov subspace, matrix function,  parameter dependent systems, global Arnoldi method,  exponential function, skeleton approximation.
\end{keywords}

\section{Introduction}\label{Intro}
Let $A\in\mathbb{R}^{n\times n}$ be a large  and sparse matrix, and let  $V\in\mathbb{R}^{n\times p}$ with $1\leq p\ll n$. We are interested in approximating numerically expressions of the form
\begin{equation}\label{If}
\mathcal{I}(f):=f(A)V
\end{equation}
where $f$ is a function that is defined on the convex hull of the spectrum of $A$. The superscript $^T$ denotes transposition. The need to evaluate matrix functions of the forms \eqref{If} arises in various applications such as in network analysis \cite{ESTRADA}, machine learning \cite{NGO}, electronic structure computation \cite{BARONI,SAAD} and the solution of ill-posed problems \cite{FENU,HANSEN}. When the matrix $A$ is a small to meduim size, the matrix function $f(A)$ can be determined by the  spectral factorization of $A$; see  \cite{Higham,HANSEN}, for discussions on several possible definitions of matrix functions. In  many applications, the  matrix $A$ is large that it is impractical to evaluate its spectral factorization. For this case, several projection methods have been developed. These methods consist of projecting the problem \eqref{If} onto a  Krylov subspace with a small dimension. The  projected part  $H$ of $A$  is then used to  evaluate $f(H)$ by determining the spectral factorization of $H$ and then get an approximation of $f(A)V$. In the context of approximating the action of a matrix function $f(A)$ on a some vector $v\in\mathbb{R}^n$, several polynomial methods \cite{Beckermann,DruskinTwo,SAAD1,Hochbruck} based on the standard Arnoldi and Lanczos Krylov methods have been proposed. Druskin and Knizhnerman \cite{DruskinExtended}, have shown that when $f$ cannot be approximated accurately by  polynomials on the spectrum of $A$, then $f(A)v$ cannot be approximated accurately by classical methods. They proposed the extended Krylov method for the symmetric case and the process was generalized to the nonsymmetric matrices by Simoncini in \cite{SimonciniNew}.  This method was applied to approximate the solution of the Sylvester, Riccati and Lyapunov equations \cite{Agougil,Heyouni,SimonciniNew}.

\noindent Another technique for the evaluation of matrix functions is the rational Arnoldi method. This process  was first proposed by Ruhe \cite{Ruhe} in the context of computing the eigenvalues and have been used during the last years for the approximation of matrix functions, see. \cite{Guttel,Pranic,Druskin,DruskinSim,DruskinKnizhnerman,Knizhnerman}. In this paper, we present the global extended-rational Arnoldi method to approximate the matrix function \eqref{If}. The extended-rational Arnoldi method was proposed  and applied  to model reduction by \cite{Abidi1}. As mentioned in \cite{Abidi1}, the extended-rational Krylov subspace \eqref{KmER} is richer than the rational Krylov subspace and represents a generalization of the extended Krylov subspace. We propose an adaptive computation of the shifts $(s_i)$ to generate an $F$-orthonormal basis for \eqref{KmER} in the case where  $f(A)=\{e^{-tA},(A-\sigma I_n)^{-1}\}$ for definite matrix $A$. This procedure is based on a generalization of the procedure used in \cite{Druskin}. In addition, we apply the proposed method to solve parameter dependent systems \eqref{SL} with multiple right hand sides \cite{Gu,SIMONCINI}. These parameter systems have numerous applications in control theory, structural dynamics and time-dependent PDEs; see, \cite{Feriani}. 

\noindent This  paper  is organized as follows. In Section $2$, we give some preliminaries and then  we introduce the global extended-rational Arnoldi process with some properties. Section $3$ describes the application of this process to the approximation of the matrix function given in \eqref{If} and solving the parameter systems. We also propose an adaptive computation of the shifts $(s_i)$. Finally, some numerical experiments that illustrate the quality of the  computed  approximations  are presented in Section $5$.
\section{The global extended-rational Arnoldi method}
\subsection{Preliminaries and notations}
We begin by recalling some notations and definitions that will be used throughout this paper. The Kronecker product satisfies the following properties
\begin{enumerate}
    \item $(A\otimes B)(C\otimes D)=AC\otimes BD.$
    \item $(A\otimes B)^T=A^T\otimes B^T$.
\end{enumerate}

\begin{definition}\cite{BOUYOULI}
Partition the matrices
$M=[M_1,\ldots,M_s]\in\mathbb{R}^{n\times sp}$ and
$N=[N_1,\ldots,N_{l}]\in\mathbb{R}^{n\times lp}$ into block columns
$M_i,N_j\in\mathbb{R}^{n\times p}$, and define the $\diamond$-product of the matrices $M$
and $N$ as
\begin{align}\label{diamond}
M^{T}\diamond N=[\langle N_j,M_i\rangle_F]_{i=1,\ldots,s}^{j=1,\ldots,l}\in{\mathbb R}^{s\times l}.
\end{align}
\end{definition}
The following proposition gives some properties satisfied by the above product.
\begin{proposition}\cite{BellalijJ,BOUYOULI}\label{prop:propertiesdiamond}
Let $A,B,C\in\mathbb{R}^{n\times ps}$, $D\in\mathbb{R}^{n\times n}$, $L\in\mathbb{R}^{p\times p}$, and $\alpha\in\mathbb{R}$. Then we have,
\begin{enumerate}
    \item $(A+B)^T\diamond C=A^T\diamond C+B^T\diamond C$.
    \item $A^T\diamond (B+C)=A^T\diamond B+A^T\diamond C$.
    \item $(\alpha A)^T\diamond C=\alpha(A^T\diamond C)$.
   \item $(A^T\diamond B)^T=B^T\diamond A$.
   \item $(DA)^T\diamond B=A^T\diamond (D^TB)$. 
   \item $A^T\diamond (B(L\otimes I_p))=(A^T\diamond B)L$.
\end{enumerate}
\end{proposition}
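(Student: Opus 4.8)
The plan is to reduce all six identities to elementary properties of the Frobenius inner product $\langle X,Y\rangle_F=\mathrm{tr}(X^TY)$ acting on the block columns, and then to verify each identity entry by entry. Write $A=[A_1,\ldots,A_s]$, $B=[B_1,\ldots,B_s]$ and $C=[C_1,\ldots,C_s]$ with $A_i,B_i,C_i\in\mathbb{R}^{n\times p}$. By \eqref{diamond}, the $(i,j)$ entry of $A^T\diamond B$ is $\langle B_j,A_i\rangle_F$, so each claimed equality between $\diamond$-products is equivalent to an equality between real matrices of size $s\times s$ (or $s\times l$ where appropriate) that can be checked componentwise.

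Items $1$–$4$ are then immediate. For item $1$, the $i$-th block column of $A+B$ is $A_i+B_i$, hence the $(i,j)$ entry of $(A+B)^T\diamond C$ is $\langle C_j,A_i+B_i\rangle_F=\langle C_j,A_i\rangle_F+\langle C_j,B_i\rangle_F$ by linearity of $\langle\cdot,\cdot\rangle_F$ in its second argument, and this is exactly the $(i,j)$ entry of $A^T\diamond C+B^T\diamond C$. Item $2$ is the mirror statement, using linearity in the first argument, and item $3$ uses homogeneity of $\langle\cdot,\cdot\rangle_F$. For item $4$, symmetry of the Frobenius inner product gives that the $(j,i)$ entry of $(A^T\diamond B)^T$ equals the $(i,j)$ entry of $A^T\diamond B$, namely $\langle B_j,A_i\rangle_F=\langle A_i,B_j\rangle_F$, which is the $(j,i)$ entry of $B^T\diamond A$.

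Items $5$ and $6$ use the adjoint/cyclic behaviour of the trace together with the Kronecker identities recalled above. For item $5$, the $i$-th block column of $DA$ is $DA_i$, so the $(i,j)$ entry of $(DA)^T\diamond B$ is $\langle B_j,DA_i\rangle_F=\mathrm{tr}(B_j^TDA_i)$; likewise the $j$-th block column of $D^TB$ is $D^TB_j$, so the $(i,j)$ entry of $A^T\diamond(D^TB)$ is $\langle D^TB_j,A_i\rangle_F=\mathrm{tr}\big((D^TB_j)^TA_i\big)=\mathrm{tr}(B_j^TDA_i)$, and the two coincide. For item $6$, the only step that is not purely formal is identifying the block-column structure of $B(L\otimes I_p)$: writing $e_j$ for the $j$-th canonical basis vector of the appropriate size, the $j$-th block column of $B(L\otimes I_p)$ is $B(L\otimes I_p)(e_j\otimes I_p)=B\big((Le_j)\otimes I_p\big)=\sum_k L_{kj}\,B(e_k\otimes I_p)=\sum_k L_{kj}B_k$, where the mixed-product rule $(A\otimes B)(C\otimes D)=AC\otimes BD$ was used. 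Consequently the $(i,j)$ entry of $A^T\diamond\big(B(L\otimes I_p)\big)$ is $\big\langle\sum_k L_{kj}B_k,\,A_i\big\rangle_F=\sum_k L_{kj}\langle B_k,A_i\rangle_F=\sum_k (A^T\diamond B)_{ik}L_{kj}$, which is precisely the $(i,j)$ entry of $(A^T\diamond B)L$.

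I do not expect a genuine obstacle here: the statement is essentially bookkeeping, and the one place where I would slow down is the block-column identification for $B(L\otimes I_p)$ in item $6$, to make sure the indices of $L$ act on the correct side and produce a right multiplication by $L$ rather than by $L^T$.
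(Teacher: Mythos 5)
Your verification is correct, and it is the standard entrywise argument; the paper itself gives no proof of this proposition (it is quoted from \cite{BellalijJ,BOUYOULI}), so there is nothing in the text to deviate from, and your reduction of items 5--6 to the trace identity and the mixed-product rule is exactly what those references do. One small remark: as stated, $L\in\mathbb{R}^{p\times p}$ makes $B(L\otimes I_p)$ dimensionally inconsistent unless $s=p$; your computation implicitly (and correctly) treats $L$ as acting on the block index, i.e.\ as an $s\times s$ matrix, which is how the identity is actually used later in the paper (e.g.\ with $\mathcal{T}_{2m}$ and $\mathcal{Y}_{2m}$), so your handling of item 6, including the check that the multiplication comes out as $L$ and not $L^T$, is the right one.
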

\subsection{Description of the process}
Global Krylov subspace techniques were first proposed in \cite{Jbilou} for solving linear systems of equations with multiple right hand sides and also for large-scale  Lyapunov matrix equations. The global extended-rational Krylov subspace was first introduced in \cite{Abidi1} and  it is defined as the  subspace of $\mathbb{R}^{n\times p}$ spanned by the vectors (blocks) 
$$V,AV,\ldots,A^{m-1}V, \;{\rm and}\;  (A-s_1I_n)^{-1}V,(A-s_1I_n)^{-1}(A-s_2 I_n)^{-1}V,\ldots,\prod\limits_{i=1}^{m}(A-s_iI_n)^{-1}V.$$ \\This subspace is denoted by
\begin{equation}\label{KmER}
\scalebox{0.95}{$\mathcal{RK}_m^{e}(A,V)=\text{span}\left\{V,(A-s_1I_n)^{-1}V,\ldots,A^{m-1}V,\prod\limits_{i=1}^m(A-s_iI_n)^{-1}V\right\} \subset \mathbb{R}^{n \times p}$}
\end{equation}
where $\{s_1,\ldots,s_m\}$ are some selected complex parameters all distinct from the eigenvalues of $A$. We notice here that the subspace $\mathcal{RK}_m^{e}(A,V)$ is a subspace of $\mathbb{R}^{n \times p}$ so the vectors are blocks of size $ n\times p$. We assume that all the vectors (blocks) in \eqref{KmER} are linearly independent.
Now, we describe the global extended-rational Arnoldi process to generate an $F$-orthonormal basis $\mathcal{V}_{2m+2}=\{V_1,V_2,\ldots,V_{2m+2}\}$ with $V_i\in\mathbb{R}^{n\times p}$ for the global extended-rational Krylov subspace $\mathcal{RK}_{m+1}^{e}(A,V)$, and we derive some algebraic relations related to this process. The block vector $V_i$'s  are said to be $F$-orthonormal, (with respect to the Frobenius inner product), if
\begin{equation*}
\langle V_j,V_k \rangle_F := {\rm trace}(V_k^TV_j)=
\left\{\begin{array}{cc} 1 & j=k,\\ 0 & j\ne k.\end{array}\right.
\end{equation*}
Based on the Gram-Schmidt orthogonalization process, the first two blocks $V_1$ and $V_2$ are computed via the formulas
\begin{equation}\label{V0}
\begin{array}{ll}
    V_1&=\dfrac{V}{\alpha_{1,1}},\\
    V_2&=\dfrac{\widetilde{V}_2}{\alpha_{2,2}}, \quad \widetilde{V}_2=(A-s_1 I_n)^{-1}V-\alpha_{1,2}V_1 ,
    \end{array}
\end{equation}
where $\alpha_{1,1}=\|V\|_F$, $\alpha_{1,2}=\langle (A-s_1 I_n)^{-1}V,V_1\rangle_F$ and $\alpha_{2,2}=\|\widetilde{V}_2\|$. To compute the block vectors $V_{2j+1}$ and $V_{2j+2}$, for $j=1,\ldots,m-1$, we use the following formulas
\begin{equation}\label{VV}
\begin{array}{ccl}
h_{2j+1,2j-1}V_{2j+1}&=&AV_{2j-1}-\sum\limits_{i=1}^{2j}h_{i,2j-1}V_i=\widetilde{V}_{2j+1}\\
h_{2j+2,2j}V_{2j+2}&=&(A-s_j I_n)^{-1}V_{2j}-\sum\limits_{i=1}^{2j+1} h_{i,2j}V_i=\widetilde{V}_{2j+2}
\end{array}
\end{equation}
where the coefficients $h_{1,2j-1},\ldots,h_{2j,2j-1}$ and $h_{1,2j},\ldots,h_{2j+1,2j}$ are determined so that the vectors satisfy the $F$-orthogonality condition
\[
V_{2j+1}\bot_F V_1,\ldots,V_{2j}\quad \text{and}\quad V_{2j+2}\bot_F V_1,\ldots,V_{2j+1}.
\]
Thus, the coefficients $h_{1,2j-1},\ldots,h_{2j,2j-1}$ and $h_{1,2j},\ldots,h_{2j+1,2j}$ are written as
\begin{align}
h_{i,2j-1}=\langle AV_{2j-1},V_i\rangle_F \quad \text{and} \quad h_{i,2j}=\langle (A-s_j I_n)^{-1}V_{2j},V_i\rangle_F\label{hi}
\end{align}
The coefficients $h_{2j+1,2j-1}$ and $h_{2j+2,2j}$ are such that $\|V_{2j+1}\|_F=1$ and $\|V_{2j+2}\|_F=1$ respectively. Hence, 
\begin{align}
    h_{2j+1,2j-1}=\|\widetilde{V}_{2j+1}\|_F \text{ and } h_{2j+2,2j}=\|\widetilde{V}_{2j+2}\|_F
\end{align}
The global extended-rational Arnoldi algorithm is summarized in the following algorithm (Algorithm \ref{alg:Arnoldi}).

\begin{algorithm}
\caption{The global extended-rational Arnoldi algorithm (GERA)}\label{alg:Arnoldi}
Inputs: Matrix $A$, initial block $V$, and the shifts $\{s_1,\ldots,s_m\}$ .
\begin{enumerate}
\item $\alpha_{1,1}=\|v\|_F;$ $V_1=V/\alpha_{1,1};$
\item $\alpha_{1,2}=\langle(A-s_1 I_n)^{-1}V,V_1\rangle_F;$ $\widetilde{V}_2=(A-s_1 I_n)^{-1}V-\alpha_{1,2}V_1;$
\item $\alpha_{2,2}=\|\widetilde{V}_2\|_F;$ $V_2=\widetilde{V}_2/\alpha_{2,2};$
\item For $j=1:m$
\begin{enumerate}
\item $\widetilde{V}_{2j+1}=AV_{2j-1}$.
\item For $i=1:2j$
\begin{itemize}
\item $h_{i,2j-1}=\langle\widetilde{V}_{2j+1},V_i\rangle_F;$
\item $\widetilde{V}_{2j+1}=\widetilde{V}_{2j+1}-h_{i,2j-1}V_i;$
\item endfor
\end{itemize}
\item $h_{2j+1,2j-1}=\|\widetilde{V}_{2j+1}\|_F;$
\item $V_{2j+1}=\widetilde{V}_{2j+1}/h_{2j+1,2j-1};$
\item $\widetilde{V}_{2j+2}=(A-s_jI_n)^{-1}V_{2j}.$
\item for $i=1:2j+1$
\begin{itemize}
\item $h_{i,2j}=\langle\widetilde{V}_{2j+2},V_i\rangle_F;$
\item $\widetilde{V}_{2j+2}=\widetilde{V}_{2j+2}-h_{i,2j}V_i$
\item endfor
\end{itemize}
\item $h_{2j+2,2j}=\|\widetilde{V}_{2j+2}\|_F;$
\item $V_{2j+2}=\widetilde{V}_{2j+2}/h_{2j+2,2j}$
\item endfor
\end{enumerate}
\end{enumerate}
Output: $F$-Orthonormal basis $\mathcal{V}_{2m+2}=[V_1,\ldots,V_{2m+2}].$
\end{algorithm}

\noindent The set of shifts $\{s_1,\ldots,s_m\}$ is chosen a priori or adaptively during the algorithm. The selection of shifts will be explained later. If all $h_{2j+1,2j-1}$ and $h_{2j+2,2j}$ are numerically nonzero,  then Algorithm \ref{alg:Arnoldi} determines an $F$-orthonormal basis $V_1,\ldots,V_{2m+2}$ $(V_j\in\mathbb{R}^{n\times p})$ of the global extended-rational Krylov subspace $\mathcal{RK}^{e}_{m+1}(A,V)$. Algorithm \ref{alg:Arnoldi} constructs also an upper block Hessenberg matrix $\mathcal{\widetilde{H}}_{2m}=[h_{i,j}]\in\mathbb{R}^{2(m+1)\times 2m}$. We now introduce the $2m\times 2m$ matrix given by
\begin{align}\label{T}
    \mathcal{T}_{2m}=\mathcal{V}_{2m}^T\diamond A\mathcal{V}_{2m}=[t_{i,j}],
\end{align}
where $t_{i,j}=\langle AV_j,V_i\rangle_F,$ $i,j=1,\ldots,2m$. $\mathcal{T}_{2m}$ is the restriction of the matrix $A$ to the extended-rational Krylov subspace $\mathcal{RK}^{e}_m(A,V)$. 
\begin{proposition}
Let the matrix $A\in\mathbb{R}^{n\times n}$, and let $V\in\mathbb{R}^{n\times p}$. The $F$-orthonormal basis $V_1,\ldots,V_{2m+2}$ determined by the recursion formulas \eqref{VV} satisfies, for $j=1,\ldots,m$
\begin{equation}\label{Av}
\begin{array}{ccl}
AV_{2j-1}\text{ and }AV_{2j}&\in&\text{span}\{V_1,\ldots,V_{2j+1}\}
\end{array}
\end{equation}
\end{proposition}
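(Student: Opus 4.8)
The plan is to prove the two inclusions in \eqref{Av} separately: the one for $AV_{2j-1}$ is immediate from the recurrence, while the one for $AV_{2j}$ needs a short induction on $j$. Throughout I would abbreviate $\mathcal{W}_k:=\text{span}\{V_1,\ldots,V_k\}$, so the statement to be proved is $AV_{2j-1}\in\mathcal{W}_{2j+1}$ and $AV_{2j}\in\mathcal{W}_{2j+1}$ for $j=1,\ldots,m$. For the odd index I would simply solve the first relation of \eqref{VV} for $AV_{2j-1}$, which gives $AV_{2j-1}=h_{2j+1,2j-1}V_{2j+1}+\sum_{i=1}^{2j}h_{i,2j-1}V_i\in\mathcal{W}_{2j+1}$; this requires nothing beyond the construction and is valid for every $j$, so it may be used freely afterwards.

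For the even index the key (and only) analytic ingredient is the resolvent identity $A(A-sI_n)^{-1}W=W+s(A-sI_n)^{-1}W$, valid for any block $W$ and any shift $s$ not in the spectrum of $A$. From the second relation of \eqref{VV} applied with loop index $j-1$ (or from \eqref{V0} when $j=1$, with $V$ playing the role of $V_{2j-2}$), the vector $(A-sI_n)^{-1}V_{2j-2}$ equals $h_{2j,2j-2}V_{2j}$ plus a linear combination of $V_1,\ldots,V_{2j-1}$, where $s$ is the shift used in that step; in particular $(A-sI_n)^{-1}V_{2j-2}\in\mathcal{W}_{2j}$, and then by the resolvent identity $A(A-sI_n)^{-1}V_{2j-2}=V_{2j-2}+s(A-sI_n)^{-1}V_{2j-2}\in\mathcal{W}_{2j}$ as well. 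Applying $A$ to that same relation and dividing by the nonzero pivot $h_{2j,2j-2}$ yields
\[
AV_{2j}=\frac{1}{h_{2j,2j-2}}\Big(A(A-sI_n)^{-1}V_{2j-2}-\sum_{i=1}^{2j-1}h_{i,2j-2}\,AV_i\Big),
\]
so it remains only to check that $AV_i\in\mathcal{W}_{2j+1}$ for each $1\le i\le 2j-1$.

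This is where the induction closes: if $i=2\ell-1$ is odd then $\ell\le j$ and $AV_{2\ell-1}\in\mathcal{W}_{2\ell+1}\subseteq\mathcal{W}_{2j+1}$ by the odd-index inclusion already shown; if $i=2\ell$ is even then necessarily $\ell\le j-1$, so $AV_{2\ell}\in\mathcal{W}_{2\ell+1}\subseteq\mathcal{W}_{2j+1}$ by the induction hypothesis. Hence every term on the right-hand side lies in $\mathcal{W}_{2j+1}$, giving $AV_{2j}\in\mathcal{W}_{2j+1}$. The base case $j=1$ is the same computation run from \eqref{V0}: there $(A-s_1I_n)^{-1}V=\alpha_{2,2}V_2+\alpha_{1,2}V_1\in\mathcal{W}_2$, $A(A-s_1I_n)^{-1}V=V+s_1(A-s_1I_n)^{-1}V\in\mathcal{W}_2$, and $AV_1\in\mathcal{W}_3$ by the odd-index case, so $AV_2\in\mathcal{W}_3$.

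I do not expect any genuine obstacle here; the proof is a rearrangement of the recurrences together with one resolvent identity. The only point requiring care is the index bookkeeping — reading off from the correct line of \eqref{VV} which shift produced $V_{2j}$, and verifying the inequalities $2\ell+1\le 2j+1$ that make the nested inclusions $\mathcal{W}_{2\ell+1}\subseteq\mathcal{W}_{2j+1}$ legitimate. The divisions by $h_{2j,2j-2}$ are justified precisely by the standing assumption (used already to run Algorithm \ref{alg:Arnoldi}) that these coefficients are numerically nonzero.
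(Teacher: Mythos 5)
Your proof is correct. The paper in fact states this proposition without giving a proof, but your argument is precisely the one it implicitly relies on: the odd-index inclusion is read off directly from the first recursion in \eqref{VV}, and your even-index step via the resolvent identity $A(A-sI_n)^{-1}W=W+s(A-sI_n)^{-1}W$ is algebraically the same manipulation the paper performs later (multiplying the second recursion in \eqref{VV} by $(A-s_jI_n)$ from the left) when it derives the columns of $\mathcal{T}_{2m}$ in \eqref{t2}--\eqref{t3}; your contribution is simply to organize it as an explicit (strong) induction on $j$, with the base case handled from \eqref{V0}, which is exactly what is needed and is carried out with the index bookkeeping done correctly.
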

%\begin{proof}
%These properties follow from equations \eqref{recfo1} and \eqref{recfo2}.
%\end{proof}
We notice that in the  formulas given by  \eqref{Av}, $\mathcal{T}_{2m}$ is a block upper Hessenberg matrix with $2\times 2$ blocks, since $\langle AV_{2j-1},V_i\rangle_F=0$ and $\langle AV_{2j},V_i\rangle_F=0$ ( for $j=1,\ldots,m$; $i\geq 2j+2$). 
\begin{proposition}
Assume that $m$ steps of Algorithm \ref{alg:Arnoldi} have been run and let $\widetilde{\mathcal{T}}_{2m}=\mathcal{V}^T_{2m+2}\diamond A\mathcal{V}_{2m}$, then we have the following relations
\begin{equation}\label{AT}
\begin{array}{rcl}
A\mathcal{V}_{2m}&=&\mathcal{V}_{2m+2}(\widetilde{\mathcal{T}}_{2m}\otimes I_p)\\
&=&\mathcal{V}_{2m}(\mathcal{T}_{2m}\otimes I_p)+V_{2m+1}(\begin{bmatrix}t_{2m+1,2m-1}\,,&\,t_{2m+1,2m}\end{bmatrix}E^T_m\otimes I_p),
\end{array}
\end{equation}
where the matrix $E_m=[e_{2m-1},e_{2m}]\in\mathbb{R}^{2m\times 2}$ corresponds to the last two columns of the identity matrix $I_{2m}$.
\end{proposition}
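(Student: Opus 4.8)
The plan is to expand every block column $AV_j$ of $A\mathcal{V}_{2m}$ in the $F$-orthonormal basis $\{V_1,\ldots,V_{2m+2}\}$ and then simply read off the coefficient matrix. First I would invoke the previous proposition: by \eqref{Av}, for $j=1,\ldots,m$ both $AV_{2j-1}$ and $AV_{2j}$ lie in $\text{span}\{V_1,\ldots,V_{2j+1}\}\subseteq\text{span}\{V_1,\ldots,V_{2m+2}\}$. Since the $V_i$ are $F$-orthonormal, this yields the expansion $AV_j=\sum_{i=1}^{2m+2}\langle AV_j,V_i\rangle_F\,V_i$ for each $j=1,\ldots,2m$, whose coefficients are exactly the entries of $\widetilde{\mathcal{T}}_{2m}=\mathcal{V}^T_{2m+2}\diamond A\mathcal{V}_{2m}$. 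Collecting these $2m$ identities into one block identity, and using the Kronecker bookkeeping that a block matrix $\mathcal{V}_{2m+2}$ post-multiplied by $C\otimes I_p$ has block columns $\sum_i C_{ij}V_i$, gives the first equality $A\mathcal{V}_{2m}=\mathcal{V}_{2m+2}(\widetilde{\mathcal{T}}_{2m}\otimes I_p)$.

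For the second equality I would exploit the block Hessenberg structure already recorded after \eqref{Av}, namely $\langle AV_{2j-1},V_i\rangle_F=\langle AV_{2j},V_i\rangle_F=0$ for $i\ge 2j+2$. Partition $\widetilde{\mathcal{T}}_{2m}\in\mathbb{R}^{(2m+2)\times 2m}$ into its first $2m$ rows and its last two rows. The first $2m$ rows form precisely $\mathcal{T}_{2m}=\mathcal{V}^T_{2m}\diamond A\mathcal{V}_{2m}$ as in \eqref{T}. In row $2m+1$, the column-$j$ entry is $t_{2m+1,j}=\langle AV_j,V_{2m+1}\rangle_F$, which vanishes unless $j\in\{2m-1,2m\}$: indeed, for $j\le 2m-2$ one has $AV_j\in\text{span}\{V_1,\ldots,V_{2m-1}\}$ by \eqref{Av}, hence orthogonal to $V_{2m+1}$. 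Thus that row equals $t_{2m+1,2m-1}e_{2m-1}^T+t_{2m+1,2m}e_{2m}^T=[\,t_{2m+1,2m-1}\;,\;t_{2m+1,2m}\,]E_m^T$. Row $2m+2$ is identically zero for the same reason (for $j\in\{2m-1,2m\}$, $AV_j\in\text{span}\{V_1,\ldots,V_{2m+1}\}$). Substituting this partition into $\mathcal{V}_{2m+2}(\widetilde{\mathcal{T}}_{2m}\otimes I_p)$ and splitting $\mathcal{V}_{2m+2}=[\mathcal{V}_{2m},\,V_{2m+1},\,V_{2m+2}]$ produces the claimed decomposition, the $V_{2m+2}$ contribution dropping out because its coefficient block is zero.

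Every step is routine once this viewpoint is fixed; the one place requiring care is the Kronecker/$\diamond$ translation, i.e.\ verifying that $\mathcal{V}_{2m+2}$ times $(C\otimes I_p)$ really forms the block columns $\sum_i C_{ij}V_i$ and that the block-column splitting commutes with the row partition of $C$. This can be checked entrywise or deduced from Proposition~\ref{prop:propertiesdiamond} together with the standard identity $(A\otimes B)(C\otimes D)=AC\otimes BD$. I expect this bookkeeping, rather than any genuine difficulty, to be the main thing to get right; the vanishing of the lower block rows of $\widetilde{\mathcal{T}}_{2m}$ is immediate from \eqref{Av}.
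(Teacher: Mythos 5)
Your proof is correct and takes essentially the same route as the paper: membership of $AV_j$ in the span given by \eqref{Av} together with $F$-orthonormality identifies the coefficient matrix as $\widetilde{\mathcal{T}}_{2m}=\mathcal{V}^T_{2m+2}\diamond A\mathcal{V}_{2m}$, and the vanishing of the entries in the last two rows outside columns $2m-1,2m$ yields the split into the $\mathcal{T}_{2m}$ part and the $V_{2m+1}$ term. Your explicit column-by-column expansion and the verification that row $2m+2$ vanishes is just a slightly more detailed rendering of the paper's appeal to the $\diamond$-product identity and the block Hessenberg structure.
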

\begin{proof}
According to \eqref{Av}, we obtain $A\mathcal{V}_{2m}\in\mathcal{RK}^{e}_{m+1}(A,V)$, then there exists a matrix $T\in\mathbb{R}^{(2m+2)\times 2m}$ such that 
\[
A\mathcal{V}_{2m}=\mathcal{V}_{2m+2}(T\otimes I_p).
\]
Using properties of the $\diamond$-product, we obtain
\begin{align*}
\mathcal{V}^T_{2m+2}\diamond A\mathcal{V}_{2m}&=\mathcal{V}^T_{2m+2}\diamond [\mathcal{V}_{2m+2}(T\otimes I_p)]\\
&=(\mathcal{V}^T_{2m+2}\diamond\mathcal{V}_{2m+2})T=T 
\end{align*}
It follows that $T=\widetilde{\mathcal{T}}_{2m}$. Since $\widetilde{\mathcal{T}}_{2m}$ is an upper block Hessenberg matrix with $2\times 2$ blocks and $t_{2j+2,2j}=\langle Av_{2j},v_{2j+2}\rangle_F=0$ then $\mathcal{V}_{2m+2}(\widetilde{\mathcal{T}}_{2m}\otimes I_p)$ can be decomposed as follows
\[
\mathcal{V}_{2m+2}(\widetilde{\mathcal{T}}_{2m}\otimes I_p)=\mathcal{V}_{2m}\mathcal{T}_{2m}+v_{2m+1}\begin{bmatrix}t_{2m+1,2m-1}\,,&\,t_{2m+1,2m}\end{bmatrix}E^T_m
\]
Which completes the proof.
\end{proof}

\noindent The next proposition gives the entries of $\mathcal{T}_{2m}$ in terms of the
recursion coefficients. This will allow us to compute the entries quite efficiently.  
\begin{proposition}
Let $\widetilde{\mathcal{T}}_{2m}=[t_{:,1},\ldots,t_{:,2m}]$ and $\widetilde{\mathcal{H}}_{2m}=[h_{:,1},\ldots,h_{:,2m}]$ be the upper block Hessenberg matrices where $h_{:,j}$ and $t_{:,j}\in\mathbb{R}^{2m+2}$ are the $j$-th column of $\widetilde{\mathcal{H}}_{2m}$ and $\widetilde{\mathcal{T}}_{2m}$, respectively. Then the odd columns are such that  
\begin{align}\label{t1}
t_{:,2j-1}=h_{:,2j-1} \quad \text{for } j=1,\ldots,m.
\end{align}
The even columns satisfy the following relations
\begin{align}
t_{:,2}&=\dfrac{(\alpha_{1,1}+s_1\alpha_{1,2})e_1+s_1\alpha_{2,2}e_2-\alpha_{1,2}h_{:,1}}{\alpha_{2,2}},\label{t2}\\
\intertext{and for $j=1\ldots,m-1$}
t_{:,2j+2}&=\dfrac{1}{h_{2j+2,2j}}\left[s_jh_{2j+2,2j}e_{2j+2}+e_{2j}-\sum\limits_{i=1}^{2j+1}h_{i,2j}(t_{:,i}-s_j e_i)\right],\label{t3}
\end{align}
where $e_i$ corresponds to the $i$-th column vector of the canonical basis $\mathbb{R}^{2m+2}$ and $\alpha_{1,1}, \alpha_{1,2}$ and $\alpha_{2,2}$ are defined from \eqref{V0}.
\end{proposition}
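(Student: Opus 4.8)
The plan is to read \eqref{t1} off directly from the two Arnoldi recursions and to obtain the even columns \eqref{t2}--\eqref{t3} by ``inverting'' the rational step of the process; throughout I would use the standing assumption of Algorithm \ref{alg:Arnoldi} that the subdiagonal coefficients $h_{2j+1,2j-1}$ and $h_{2j+2,2j}$ are nonzero, together with the $F$-orthonormality of $\mathcal V_{2m+2}$. For the odd columns, absorbing the normalization term $h_{2j+1,2j-1}V_{2j+1}$ into the sum turns the first relation in \eqref{VV} into $AV_{2j-1}=\sum_{i=1}^{2j+1}h_{i,2j-1}V_i$ for $j=1,\ldots,m$ (for $j=m$ this is exactly the step that produces $V_{2m+1}$). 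Taking the Frobenius inner product with $V_k$ for $k=1,\ldots,2m+2$ gives $t_{k,2j-1}=\langle AV_{2j-1},V_k\rangle_F=h_{k,2j-1}$ for $k\le 2j+1$ and $0$ otherwise; since the column $h_{:,2j-1}$ of $\widetilde{\mathcal H}_{2m}$ has exactly this structure, \eqref{t1} follows.

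For an even column $t_{:,2j+2}$ with $1\le j\le m-1$ I would argue as follows. Moving the normalization term inside the sum, the second relation in \eqref{VV} reads $(A-s_jI_n)^{-1}V_{2j}=\sum_{i=1}^{2j+2}h_{i,2j}V_i$. Applying $(A-s_jI_n)$ to both sides, distributing it over the sum, and isolating the top-index term (this is where $h_{2j+2,2j}\ne0$ enters) yields the identity in $\mathbb R^{n\times p}$
\[
h_{2j+2,2j}\,AV_{2j+2}=V_{2j}+s_j\sum_{i=1}^{2j+2}h_{i,2j}V_i-\sum_{i=1}^{2j+1}h_{i,2j}AV_i .
\]
Taking $\langle\,\cdot\,,V_k\rangle_F$ for each $k=1,\ldots,2m+2$, using $\langle V_{2j},V_k\rangle_F=\delta_{k,2j}$, $\langle V_i,V_k\rangle_F=\delta_{i,k}$ and $\langle AV_i,V_k\rangle_F=t_{k,i}$, and stacking the results into a vector of $\mathbb R^{2m+2}$, gives
\[
h_{2j+2,2j}\,t_{:,2j+2}=e_{2j}+s_j\sum_{i=1}^{2j+2}h_{i,2j}e_i-\sum_{i=1}^{2j+1}h_{i,2j}t_{:,i} .
\]
Splitting off the $i=2j+2$ term of the middle sum as $s_jh_{2j+2,2j}e_{2j+2}$ and combining the remaining terms $s_jh_{i,2j}e_i$ ($i\le 2j+1$) with the last sum gives precisely \eqref{t3}.

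The base case \eqref{t2} is the same computation started from \eqref{V0}: from $\alpha_{2,2}V_2+\alpha_{1,2}V_1=(A-s_1I_n)^{-1}V=\alpha_{1,1}(A-s_1I_n)^{-1}V_1$ I would apply $(A-s_1I_n)$, solve for $AV_2$, and take inner products as above to obtain $\alpha_{2,2}t_{:,2}=(\alpha_{1,1}+s_1\alpha_{1,2})e_1+s_1\alpha_{2,2}e_2-\alpha_{1,2}t_{:,1}$, and then substitute $t_{:,1}=h_{:,1}$ from \eqref{t1}. Note that \eqref{t3} is recursive in $j$: its right-hand side involves only the columns $t_{:,i}$ with $i\le 2j+1$, which are either odd columns already known from \eqref{t1} or even columns of strictly smaller index (known inductively, the recursion being seeded by \eqref{t2}), so the three formulas determine $\widetilde{\mathcal T}_{2m}$ unambiguously. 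The only point requiring care is the index bookkeeping — checking that the stacked vector identities hold for \emph{all} rows $k$ up to $2m+2$, not just the ``obvious'' ones — but this is automatic since each identity is obtained by taking $\langle\,\cdot\,,V_k\rangle_F$ of a genuine equality in $\mathbb R^{n\times p}$; I do not anticipate any real obstacle beyond this routine verification.
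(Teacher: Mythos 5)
Your proof is correct and takes essentially the same route as the paper: you invert the rational step of \eqref{VV} (and of \eqref{V0} for the base case \eqref{t2}) by applying $(A-s_jI_n)$, solve for $AV_{2j+2}$ (resp.\ $AV_2$), and project onto the $F$-orthonormal basis, where taking $\langle\,\cdot\,,V_k\rangle_F$ row by row and stacking is exactly the paper's multiplication by $\mathcal{V}_{2m+2}^T$ via the $\diamond$-product. The only cosmetic difference is that you absorb the normalization terms into the sums before projecting, which leads to the same identities.
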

\begin{proof}
We have $t_{:,2j-1}=\mathcal{V}^T_{2m+2}\diamond AV_{2j-1}$. Therefore, \eqref{t1} follows from the expression of $h_{:,2j-1}$ in \eqref{hi}. Using \eqref{V0}, we obtain 
\[
\alpha_{1,2}V_1+\alpha_{2,2}V_2=\alpha_{1,1}(A-s_1 I_n)^{-1}V_1.
\]
Multiplying this last  equality by $(A-s_1 I_n)$ from the left gives
\[
\alpha_{1,2}(A-s_1 I_n)V_1+\alpha_{2,2}(A-s_1 I_n)v_2=\alpha_{1,1}V_1.
\]
Then the vector $AV_2$ is written as follows
\[
AV_2=\dfrac{1}{\alpha_{2,2}}[(\alpha_{1,1}+s_1\alpha_{1,2})V_1+s_1\alpha_{2,2}V_2-\alpha_{1,2}AV_1].
\]
The relation \eqref{t2} is obtained  by multiplying $AV_2$ by $\mathcal{V}_{2m+2}^T$ with the $\diamond$-product from the left since $t_{:,2}=\mathcal{V}_{2m+2}^T\diamond AV_2$.
The formula \eqref{t3} is obtained from the expression of $AV_{2j+2}$ for $j=1,\ldots,m-1$. Thus, multiplying the second equality in \eqref{VV} by $(A-s_j I_n)$ from the left gives
\[
h_{2j+2,2j}(A-s_j I_n)V_{2j+2}=V_{2j}-\sum\limits_{i=1}^{2j+1}h_{i,2j}(A-s_j I_n)V_i.
\]
Then,
\[
AV_{2j+2}=\dfrac{1}{h_{2j+2,2j}}\left[h_{2j+2,2j}s_jV_{2j+2}+V_{2j}-\sum\limits_{i=1}^{2j+1}h_{i,2j}(Av_i-s_jV_i)\right].
\]
The expression \eqref{t3} is easily obtained  by multiplying $AV_{2j+2}$ by $\mathcal{V}_{2m+2}^T$ with the $\diamond$-product from the left. This concludes the proof of the proposition. 
\end{proof}

\noindent We notice that if $A$ is a symmetric matrix, then the restriction matrix $\mathcal{T}_{2m}$ in \eqref{T} reduces to a symmetric and pentadiagonal matrix with the following nontrivial entries, 
\begin{align*}
t_{i,2j-1}&=h_{i,2j-1} \quad \text{for } i\in\{2j-3,\ldots,2j+1\},\\
t_{1,2}&=\begin{bmatrix} \alpha_{1,1}-(t_{1,1}-s_1)\alpha_{1,2}\end{bmatrix}/\alpha_{2,2},\\
t_{2,2}&=s_1-t_{2,1}\alpha_{1,2}/\alpha_{2,2},\\
t_{3,2}&=-t_{3,1}\alpha_{1,2}/\alpha_{2,2}.
\end{align*}
And for $j=1,\ldots,m-1$
\begin{align*}
t_{2j+1,2j+2}&=( s_jh_{2j+1,2j}-\sum\limits_{i=2j-1}^{2j+1}t_{2j+1,i}h_{i,2j} ) /h_{2j+2,2j},\\
t_{2j+2,2j+2}&=s_j-t_{2j+2,2j+1}h_{2j+1,2j}/h_{2j+2,2j}, {\rm and} \\
t_{2j+3,2j+2}&=-t_{2j+3,2j+1}h_{2j+1,2j}/h_{2j+2,2j}.
\end{align*}
\section{Application to the approximation of matrix functions}
In this section, we will show how to use the global extended-rational Arnoldi algorithm to approximate expression of the form  $f(A)V$. As in \cite{Jbilou}, the global extended-rational Krylov subspace $\mathcal{RK}^{e}_m(A,V)$ defined in \eqref{KmER} can be written as 
\begin{equation}\label{KmD}
    \mathcal{RK}^{e}_m(A,V)=\{X_{2m}\in\mathbb{R}^{n\times p}/X_{2m}=\mathcal{V}_{2m}(\mathcal{Y}_{2m}\otimes I_p)\text{ where } \mathcal{Y}_{2m}\in\mathbb{R}^{2m}\}.
\end{equation}
Then the expression $f(A)V$ can be approximated by
\begin{equation}\label{fem}
    f^{er}_{2m}:=\mathcal{P}_{2m}(f(A)V)=\|V\|_F\mathcal{V}_{2m}(f(\mathcal{T}_{2m})e_1\otimes I_p),
\end{equation}
where $\mathcal{P}_{2m}(X)=\mathcal{V}_{2m}([\mathcal{V}_{2m}^T\diamond X]\otimes I_p)\in\mathcal{RK}^{e}_m(A,V)$ for some $X\in\mathbb{R}^{n\times p}$. \\ The $n \times 2mp$  matrix  $\mathcal{V}_{2m}=[V_1,V_2,\ldots,V_{2m}]$ is the matrix corresponding to the  $F$-orthonormal basis for $\mathcal{RK}^{e}_m(A,V)$ constructed by applying $m$ steps of Algorithm \ref{alg:Arnoldi} to the pair $(A,V)$. $\mathcal{T}_{2m}$ is the projection matrix defined by \eqref{T} and $e_1$ corresponds to the first column of the identity matrix $I_{2m}$. 
\begin{lemma}[Exactness] Let $\mathcal{V}_{2m}\in\mathbb{R}^{n\times 2mp}$ be the matrix generated by Algorithm \ref{alg:Arnoldi} and let $\mathcal{T}_{2m}$ the matrix as defined by \eqref{T}. Then for any rational function $\Tilde{r}_{2m}\in\Pi_{2m}/q_m$ we have
\begin{align}
    \mathcal{P}_{2m}(\Tilde{r}_{2m}(A)V)=\|V\|_F\mathcal{V}_{2m}[(\Tilde{r}_{2m}(\mathcal{T}_{2m})e_1)\otimes I_p]
\end{align}
In particular, if $r_{2m}\in\Pi_{2m-1}/q_m$ then the global extended-rational Arnoldi approximation is exact, i.e., we have 
\begin{align}\label{rm}
    r_{2m}(A)V=\|V\|_F\mathcal{V}_{2m}[(r_{2m}(\mathcal{T}_{2m})e_1)\otimes I_p]
\end{align}
where $\Pi_{2m}$ denotes the set of polynomials of degree at most $2m$ and  $q_m$ is the  polynomial of degree $m$, whose roots are the components of $\{s_1,\ldots,s_m\}$, i.e.,  $
q_m(z)=(z-s_1)\ldots(z-s_m)$. 
\end{lemma}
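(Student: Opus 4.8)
The plan is to prove the exactness lemma in two stages: first establish the polynomial exactness relation, then bootstrap it to rational functions in $\Pi_{2m}/q_m$ and to $\Pi_{2m-1}/q_m$. The underlying principle is that the projected matrix $\mathcal{T}_{2m}$ acts on $e_1$ exactly as $A$ acts on $V/\|V\|_F$ as long as we stay inside the subspace $\mathcal{RK}^e_m(A,V)$, and the Arnoldi relation \eqref{AT} quantifies precisely when we leave it. First I would prove by induction on $k$ that $A^k V_1 = \mathcal{V}_{2m}(\mathcal{T}_{2m}^k e_1 \otimes I_p)$ for $k = 0, 1, \ldots$ up to the largest power for which $A^k V_1$ still lies in $\mathcal{RK}^e_m(A,V)$; the base case $k=0$ is immediate since $V_1 = \mathcal{V}_{2m}(e_1\otimes I_p)$, and the inductive step multiplies by $A$, uses \eqref{AT} to write $A\mathcal{V}_{2m}(\mathcal{T}_{2m}^{k-1}e_1\otimes I_p) = \mathcal{V}_{2m}(\mathcal{T}_{2m}^k e_1\otimes I_p)$ plus a remainder term supported on $V_{2m+1}$, and observes that the remainder vanishes as long as $\mathcal{T}_{2m}^{k-1}e_1$ has zero last two entries, which is exactly the condition that keeps us in the subspace. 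Symmetrically, I would prove that $\prod_{i=1}^{k}(A - s_i I_n)^{-1} V_1$ is reproduced by $\prod_{i=1}^{k}(\mathcal{T}_{2m} - s_i I_{2m})^{-1} e_1$, using the defining recursions \eqref{V0}–\eqref{VV} for the rational part of the basis together with the column formulas \eqref{t2}–\eqref{t3}; here one must check that $\mathcal{T}_{2m} - s_i I_{2m}$ is invertible, which follows since the $s_i$ are chosen distinct from the eigenvalues of $A$ and $\mathcal{T}_{2m}$ is a compression of $A$ onto a subspace built using those shifts.

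Once both generating families are handled, the general case follows by linearity. Any $\tilde r_{2m} \in \Pi_{2m}/q_m$ can be written via partial fractions (or simply as $p(A) q_m(A)^{-1}$ with $\deg p \le 2m$) as a linear combination of the blocks $V, AV, \ldots, A^{m-1}V$ together with $q_m(A)^{-1}V, (A - s_1 I_n)^{-1}q_m(A)^{-1}V$-type terms — more precisely, a linear combination of the $2m$ basis-generating vectors $A^j V_1$ for $j=0,\ldots,m-1$ and $\prod_{i=1}^{j}(A-s_iI_n)^{-1}V_1$ for $j=1,\ldots,m$, all of which lie in $\mathcal{RK}^e_m(A,V) = \mathrm{span}\,\mathcal{V}_{2m}$. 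Applying $\mathcal{P}_{2m}$, which is the identity on $\mathcal{RK}^e_m(A,V)$ by \eqref{KmD}, and invoking the two reproduction identities termwise gives $\mathcal{P}_{2m}(\tilde r_{2m}(A)V) = \|V\|_F \mathcal{V}_{2m}(\tilde r_{2m}(\mathcal{T}_{2m})e_1 \otimes I_p)$. For the sharper statement, if $r_{2m} \in \Pi_{2m-1}/q_m$ then $r_{2m}(A)V$ already lies in $\mathcal{RK}^e_m(A,V)$, so $\mathcal{P}_{2m}(r_{2m}(A)V) = r_{2m}(A)V$ and \eqref{rm} follows directly.

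The main obstacle I anticipate is the careful bookkeeping in the inductive step for the \emph{polynomial} part: one has to track exactly which entries of $\mathcal{T}_{2m}^{k}e_1$ are guaranteed zero so that the $V_{2m+1}$ remainder term in \eqref{AT} drops out, and confirm that this holds for all powers actually needed (up to $A^{m-1}V_1$, which corresponds to staying strictly inside the polynomial half of the subspace). The analogous argument for the rational part is in principle dual but is notationally heavier, since $\prod_{i=1}^{j}(A - s_i I_n)^{-1}$ does not commute as cleanly and one must use the specific nested recursion \eqref{VV} rather than a single multiplication by $A$. A clean way to handle both at once is to observe that $\mathcal{RK}^e_m(A,V)$ is, by construction, an $A$-\emph{and}-$(A-s_iI_n)^{-1}$-"nested" invariant flag, and that the Arnoldi relation says $\mathcal{T}_{2m}e_1, \mathcal{T}_{2m}^2 e_1, \ldots$ and $(\mathcal{T}_{2m}-s_iI)^{-1}e_1, \ldots$ trace out the coordinate representation of that flag; the exactness is then the statement that a rational function of numerator degree $\le 2m-1$ over $q_m$ maps $V_1$ into the flag, hence is reproduced. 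I would present the polynomial induction in full and remark that the rational case is entirely parallel, citing \eqref{t2}–\eqref{t3} for the coefficient identities.
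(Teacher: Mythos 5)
Your route is genuinely different from the paper's (which follows G\"uttel's single-seed trick: set $q=q_m(A)^{-1}V$, prove by induction that $\mathcal{P}_{2m}(A^jq)=\mathcal{V}_{2m}(\mathcal{T}_{2m}^j(\mathcal{V}_{2m}^T\diamond q)\otimes I_p)$ up to $j=2m$, then recover $\mathcal{V}_{2m}^T\diamond q=\|V\|_F\,q_m(\mathcal{T}_{2m})^{-1}e_1$ from $V=q_m(A)q$), but as written it has a genuine gap in the assembly step for the first identity. You claim that any $\tilde r_{2m}\in\Pi_{2m}/q_m$ gives $\tilde r_{2m}(A)V$ as a linear combination of $A^jV_1$, $j=0,\dots,m-1$, and the nested resolvent blocks, ``all of which lie in $\mathcal{RK}^e_m(A,V)$'', so that $\mathcal{P}_{2m}$ acts termwise as the identity. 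That cannot be right on dimensional grounds: $\Pi_{2m}/q_m$ has dimension $2m+1$, while the space you land in, $(\Pi_{2m-1}/q_m)(A)V=\mathcal{RK}^e_m(A,V)$, has dimension $2m$. Concretely, $z^m=z^mq_m(z)/q_m(z)\in\Pi_{2m}/q_m$ (and any numerator of degree $2m$ has, after division by $q_m$, a polynomial part of degree $m$), and $A^mV\notin\mathcal{RK}^e_m(A,V)$ in general; for such $\tilde r_{2m}$ the projection is genuinely nontrivial and your argument does not reach the displayed formula. The missing ingredient is one further application of $A$ carried out \emph{under the projection}: from \eqref{AT}, $A\mathcal{V}_{2m}(y\otimes I_p)=\mathcal{V}_{2m}(\mathcal{T}_{2m}y\otimes I_p)+V_{2m+1}\bigl([\,t_{2m+1,2m-1}\,,\,t_{2m+1,2m}\,]E_m^Ty\otimes I_p\bigr)$, and since $\mathcal{V}_{2m}^T\diamond V_{2m+1}=0$ one gets $\mathcal{P}_{2m}(AX)=\mathcal{V}_{2m}(\mathcal{T}_{2m}y\otimes I_p)$ for every $X=\mathcal{V}_{2m}(y\otimes I_p)$ in the subspace; this single projected step handles the extra direction, and is exactly why the paper's induction can run to $j=2m$ even though exactness $\mathcal{P}_{2m}(A^jq)=A^jq$ only holds for $j\le 2m-1$. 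With that repair, your plan (exact reproduction of $A^kV_1$ for $k\le m-1$ and of the nested resolvents, then division/partial fractions) does deliver both claims; for the second statement, \eqref{rm}, your outline is already adequate since there the polynomial part has degree at most $m-1$.

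Two further points. First, your justification of the invertibility of $\mathcal{T}_{2m}-s_iI_{2m}$ is not valid: eigenvalues of a compression (Ritz values) are confined to the numerical range of $A$, not to its spectrum, so ``$s_i$ distinct from the eigenvalues of $A$'' does not imply $s_i\notin\Lambda(\mathcal{T}_{2m})$. This nonsingularity (equivalently, nonsingularity of $q_m(\mathcal{T}_{2m})$) must be assumed or argued separately; the paper itself uses $q_m(\mathcal{T}_{2m})^{-1}$ tacitly, so the assumption is shared, but your stated reason should be dropped. Second, the rational half of your scheme is not simply ``parallel'' bookkeeping: to pass from $\prod_{i\le k-1}(A-s_iI_n)^{-1}V_1$ to $\prod_{i\le k}(A-s_iI_n)^{-1}V_1$ you again need the projection identity above (apply $A-s_kI_n$ to the longer product, project with $\mathcal{V}_{2m}^T\diamond$, and invert $\mathcal{T}_{2m}-s_kI_{2m}$), which is precisely the machinery the paper's seed trick avoids having to spell out.
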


\begin{proof}
Following the idea in \cite[Lemma 3.1]{Guttel}, we consider $q=q_m(A)^{-1}V$ and we first show by induction that
\begin{align}\label{Ajq}
\mathcal{P}_{2m}(A^jq)=\mathcal{V}_{2m}(\mathcal{T}_{2m}^j(\mathcal{V}_{2m}^T\diamond q)\otimes I_p) \quad \text{for } j=0,\ldots,2m.
\end{align}
Assertion \eqref{Ajq} is obviously true for $j=0$. Assume that it is true for some $j<m$. Then by the definition of a extended-rational Krylov space we have $\mathcal{P}_{2m}(A^jq)=A^jq$, and therefore 
\begin{align*}
\mathcal{P}_{2m}(A^{j+1}q)&=\mathcal{P}_{2m}(A\mathcal{P}_{2m}(A^jq))=\mathcal{P}_{2m}(A\mathcal{V}_{2m}[\mathcal{T}^j_{2m}(\mathcal{V}_{2m}^T\diamond q)\otimes I_p])\\
&=\mathcal{V}_{2m}([\mathcal{V}_{2m}^T\diamond (A\mathcal{V}_{2m}[(\mathcal{T}_{2m}^j(\mathcal{V}^T_{2m}\diamond q))\otimes I_p])\otimes I_p)\\
\intertext{Using the properties of the $\diamond$-product, we obtain}
&=\mathcal{V}_{2m}[(\mathcal{V}_{2m}^T\diamond A\mathcal{V}_{2m})(\mathcal{T}_{2m}^j(\mathcal{V}_{2m}^T\diamond q))\otimes I_p]=\mathcal{V}_{2m}(\mathcal{T}_{2m}^{j+1}(\mathcal{V}_{2m}^T\diamond q)\otimes I_p),
\end{align*}
which establishes \eqref{Ajq}. By linearity, we obtain 
\[
V=q_m(A)q=\mathcal{V}_{2m}(q_m(\mathcal{T}_{2m})(\mathcal{V}_{2m}^T\diamond q)\otimes I_p).
\]
Using the properties of the $\diamond$-product, we obtain
\[
\mathcal{V}_{2m}^T\diamond q=\|V\|_Fq_m^{-1}(\mathcal{T}_{2m})e_1.
\]
Replacing $\mathcal{V}_{2m}^T\diamond q$ in \eqref{Ajq} completes the proof. 
\end{proof}

\noindent We consider a convex compact set $\Lambda\subset\mathbb{R}$ and we define $\mathbb{A}(\Lambda)$ as the set of analytic functions  in a neighborhood of $\Lambda$ equipped with the uniform norm $\|.\|_{L^{\infty}(\Lambda)}$ .  $\mathbb{W}(A):=\{x^TAx\,:\,x\in\mathbb{R}^n\,,\,\|x\|=1\}$ will denote the convex hull. 
In  \cite{CROUZEIX}, it was shown  that there exists a universal constant $C=1+\sqrt{2}$ such that 
\begin{align*}\label{crou}
    \|f(B)\|\leq C \|f\|_{L^{\infty}(\Lambda)}, \quad \forall f\in\mathbb{A}(\Lambda),
\end{align*}
where $B\in\mathbb{R}^{n\times n}$, with $\mathbb{W}(B)\subseteq\Lambda.$ Based on this inequality, the following result gives an upper bound for the norm of the error $f(A)V-f^{er}_{2m}$ where $f^{er}_{2m}$ is the approximation given by (\ref{fem}).
\begin{corollary}
We assume that $\mathbb{W}(A)\cup \mathbb{W}(\mathcal{T}_{2m})\subseteq \Lambda$, and let $f\in\mathbb{A}(\Lambda)$. Then the global extended-rational Arnoldi approximation $f^{er}_{2m}$ defined by \eqref{fem} satisfies
\[
\| f(A)V-f^{er}_{2m}\|_F\leq 2C\|V\|_F\min\limits_{r_{2m}\in\Pi_{2m-1}/q_m}\|f-r_{2m}\|_{L^{\infty}(\Lambda)}
\]
\end{corollary}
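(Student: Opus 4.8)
The plan is to insert the best rational approximant as an intermediate term and estimate the two resulting pieces separately, in the spirit of the classical error analysis for (polynomial and) rational Krylov methods. Fix an arbitrary $r_{2m}\in\Pi_{2m-1}/q_m$ whose poles $\{s_1,\dots,s_m\}$ lie outside $\Lambda$, so that $f-r_{2m}\in\mathbb{A}(\Lambda)$, and split
\[
f(A)V-f^{er}_{2m}=\bigl(f(A)V-r_{2m}(A)V\bigr)+\bigl(r_{2m}(A)V-f^{er}_{2m}\bigr).
\]
For the second bracket I would invoke the Exactness Lemma, which gives $r_{2m}(A)V=\|V\|_F\,\mathcal{V}_{2m}[(r_{2m}(\mathcal{T}_{2m})e_1)\otimes I_p]$; comparing with the definition \eqref{fem} of $f^{er}_{2m}=\|V\|_F\,\mathcal{V}_{2m}(f(\mathcal{T}_{2m})e_1\otimes I_p)$ yields
\[
r_{2m}(A)V-f^{er}_{2m}=\|V\|_F\,\mathcal{V}_{2m}\bigl[\bigl((r_{2m}-f)(\mathcal{T}_{2m})e_1\bigr)\otimes I_p\bigr].
\]

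Next I would bound the two pieces in the Frobenius norm. For the first one I would use the submultiplicativity $\|BV\|_F\le\|B\|_2\,\|V\|_F$ with $B=(f-r_{2m})(A)$, followed by the Crouzeix--Palencia inequality applied to $A$ (admissible since $\mathbb{W}(A)\subseteq\Lambda$ and $f-r_{2m}\in\mathbb{A}(\Lambda)$), which gives $\|f(A)V-r_{2m}(A)V\|_F\le C\,\|V\|_F\,\|f-r_{2m}\|_{L^{\infty}(\Lambda)}$. For the second one I would exploit the $F$-orthonormality of the basis, which yields $\|\mathcal{V}_{2m}(y\otimes I_p)\|_F=\|y\|_2$ for every $y\in\mathbb{R}^{2m}$ (a direct consequence of $\langle V_i,V_j\rangle_F=\delta_{ij}$); hence, using $\|e_1\|_2=1$, one has $\|r_{2m}(A)V-f^{er}_{2m}\|_F=\|V\|_F\,\|(f-r_{2m})(\mathcal{T}_{2m})e_1\|_2\le\|V\|_F\,\|(f-r_{2m})(\mathcal{T}_{2m})\|_2$, which is at most $C\,\|V\|_F\,\|f-r_{2m}\|_{L^{\infty}(\Lambda)}$ by a second application of Crouzeix--Palencia, now to $\mathcal{T}_{2m}$ (legitimate because $\mathbb{W}(\mathcal{T}_{2m})\subseteq\Lambda$).

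Adding the two estimates gives $\|f(A)V-f^{er}_{2m}\|_F\le 2C\,\|V\|_F\,\|f-r_{2m}\|_{L^{\infty}(\Lambda)}$, and taking the infimum over all admissible $r_{2m}\in\Pi_{2m-1}/q_m$ completes the proof. The argument is largely bookkeeping once the Exactness Lemma and the Crouzeix--Palencia bound are in hand; the only points that need attention are the elementary identity $\|\mathcal{V}_{2m}(y\otimes I_p)\|_F=\|y\|_2$ (which must be read off from the $F$-orthonormality relations and the Kronecker-product rules of Proposition~\ref{prop:propertiesdiamond}) and the implicit requirement that the shift parameters be chosen outside $\Lambda$, so that $f-r_{2m}$ is analytic on $\Lambda$ and the right-hand side is finite — this last point being, I expect, the main (and essentially the only) subtlety.
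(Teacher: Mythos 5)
Your proposal is correct and follows essentially the same route as the paper's own proof: insert $r_{2m}(A)V$ via the Exactness Lemma, split by the triangle inequality, bound the first term through submultiplicativity and the second through the $F$-orthonormality of $\mathcal{V}_{2m}$, and apply the Crouzeix--Palencia bound to both $A$ and $\mathcal{T}_{2m}$ before minimizing over $r_{2m}$. Your added remarks (the isometry $\|\mathcal{V}_{2m}(y\otimes I_p)\|_F=\|y\|_2$ and the need for the poles to avoid $\Lambda$) are just a more explicit write-up of steps the paper leaves implicit.
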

\begin{proof}
According to \eqref{rm}, we have $r_{2m}(A)V=\|V\|_F\mathcal{V}_{2m}[(r_{2m}(\mathcal{T}_{2m})e_1)\otimes I_p]$ for every rational function $r_{2m}\in\Pi_{2m-1}/q_m$. Thus, 
\begin{align*}
    \|f(A)V-f^{er}_{2m}\|_F&=\|f(A)V-\|V\|_F\mathcal{V}_{2m}(f(\mathcal{T}_{2m})e_1\otimes I_p)-r_{2m}(A)V+\|V\|_F\mathcal{V}_{2m}(r_{2m}(\mathcal{T}_{2m})e_1\otimes I_p)\|_F\\
    &\leq \|V\|_F(\|f(A)-r_{2m}(A)\|+\|\mathcal{V}_{2m}[(f(\mathcal{T}_{2m}-r_{2m}(\mathcal{T}_{2m}))e_1\times I_p]\|_F\\
    &\leq \|V\|_F(\|f(A)-r_{2m}(A)\|+\|f(\mathcal{T}_{2m})-r_{2m}(\mathcal{T}_{2m})\|)\\
    &\leq2C\|V\|_F\|f-r_{2m}\|_{L^{\infty}(\Lambda)}.
\end{align*}
Which completes the proof.
\end{proof}

\subsection{Shifted linear systems}
We consider the solution of the parameterized nonsingular linear systems with multiple right hand sides
\begin{equation}\label{SL}
    (A-\sigma I_n)X=B,
\end{equation}
which needs to  be solved for many values of $\sigma$, where $B\in\mathbb{R}^{n\times p}$. The solution $X=X(\sigma)$ may be written as $X=(A-\sigma I)^{-1}B\equiv f(A)B$, with $f(z)=(z-\sigma)^{-1}$ is the resolvant function. Then the approximate solutions $X_{2m}=X_{2m}(\sigma)\in\mathbb{R}^{n\times p}$ generated by the global extended-rational algorithm to the pair $(A,R_0)$ are obtained as follows
\begin{equation*}
    Z_{2m}(\sigma)=X_{2m}(\sigma)-X_0(\sigma)\in \mathcal{RK}^{e}_{m}(A,R_0)
\end{equation*}
where $R_0=R_0(\sigma)=B-(A-\sigma I)^{-1}X_0(\sigma)$ are the residual block vectors associated to initial guess $X_0(\sigma)$. By \eqref{KmD} $Z_{2m}(\sigma)=\mathcal{V}_{2m}(Y_{2m}(\sigma)\otimes I_p)$ where $Y_{2m}(\sigma)\in\mathbb{R}^{2m}$ is determined such that the new residual $R_{2m}(\sigma)=B-(A-\sigma I_n)X_{2m}$ associated  to $X_{2m}$ is $F$-orthogonal to $\mathcal{RK}^{e}_{2m}(A,R_0)$. This yields 
\begin{equation}\label{OC}
    X_{2m}(\sigma)=X_0+\mathcal{V}_{2m}(Y_{2m}(\sigma)\otimes I_p) \quad \text{and} \quad \mathcal{V}_{2m}^T\diamond R_{2m}(\sigma)=0
\end{equation}
Using \eqref{OC} relations and the following decomposition 
\begin{equation}\label{As}
    (A-\sigma I_n)\mathcal{V}_{2m}=\mathcal{V}_{2m}[(\mathcal{T}_{2m}-\sigma I_{2m})\otimes I_p]+V_{2m+1}\big(\begin{bmatrix}t_{2m+1,2m-1}\,,&\,t_{2m+1,2m}\end{bmatrix}E^T_m\otimes I_p\big),
\end{equation}
the reduced linear system can be written as 
\begin{equation}\label{RLS}
    (\mathcal{T}_{2m}-\sigma I_{2m})Y_{2m}(\sigma)=\|R_0\|_F e_1,
\end{equation}
then the approximate solution will be \[X_{2m}=\|R_0\|_F\mathcal{V}_{2m}((\mathcal{T}_{2m}-\sigma I_{2m})^{-1}e_1\otimes I_p).\]
This equality is equivalent to \eqref{fem} when $f(A)=(A-\sigma I_n)^{-1}$ corresponds to the resolvent function. In order to find a good choice of shift parameters $\{s_1,\ldots,s_m\}$ in the Algorithm \ref{alg:Arnoldi}, we consider the following function 
\begin{equation}\label{fskel}
    f_{2m,m}(\lambda,s)=f_{\lambda_1\ldots,\lambda_{2m},s_1,\ldots,s_m}(\lambda,s)=f_{m,m}(\lambda,s)-\dfrac{1}{\lambda-s}\bigg[\dfrac{g_{2m}(\lambda)}{g_{2m}(s)}-\dfrac{g_{m}(\lambda)}{g_{m}(s)}\bigg]
\end{equation}
where $f_{m,m}=f_{\lambda_1\ldots,\lambda_{m},s_1,\ldots,s_m}$ corresponds to the so-called skeleton approximation introduced in the study of Tyrtyshnikov \cite{Tyrtyshnikov}, i.e., 
\[
f_{m,m}(\lambda,s):=\bigg[\dfrac{1}{\lambda-s_1}\cdots\dfrac{1}{\lambda-s_m}\bigg]
M^{-1}\begin{bmatrix}
 \dfrac{1}{\lambda_1-s}\\\vdots\\ \dfrac{1}{\lambda_m-s}
\end{bmatrix},\quad M_{i,j}=\dfrac{1}{\lambda_i-s_j}
\]
$\lambda_1,\ldots,\lambda_{2m}$ are the eigenvalues of $\mathcal{T}_{2m}$, and
\[
g_m(z)=\dfrac{(z-\lambda_1)\ldots(z-\lambda_m)}{(z-s_1)\ldots(z-s_m)} \quad g_{2m}(z)=\dfrac{(z-\lambda_1)\ldots(z-\lambda_{2m})}{(z-s_1)\ldots(z-s_m)}
\]
\begin{proposition}
The function $f_{2m,m}$ defined in \eqref{fskel} is an $[(2m-1)|m]$ rational function of the first variable $\lambda$, and an $[(2m-1)|2m]$ rational function of the second variable $s$ interpolating $(\lambda-s)^{-1}$ at $\lambda=\lambda_1,\ldots,\lambda_{2m}$ and $s=s_1,\ldots,s_m$. Moreover, the relative error of this interpolation is 
\begin{equation*}
    \epsilon(\lambda,s)=1-(\lambda-s)f_{2m,m}(\lambda,s)=\dfrac{g_{2m}(\lambda)}{g_{2m}(s)}.
\end{equation*}
\end{proposition}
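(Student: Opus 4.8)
\noindent The plan is to collapse all four assertions into a single closed-form expression for $(\lambda-s)\,f_{2m,m}(\lambda,s)$ and read everything off from it. The starting point is the classical error identity for the skeleton (cross) approximation of the Cauchy kernel. Because the row vector $[(\lambda-s_1)^{-1},\dots,(\lambda-s_m)^{-1}]$ becomes the $i$-th row of the Cauchy matrix $M=[(\lambda_i-s_j)^{-1}]$ when $\lambda=\lambda_i$, one gets $f_{m,m}(\lambda_i,s)=(\lambda_i-s)^{-1}$ for $i=1,\dots,m$. Hence $\lambda\mapsto 1-(\lambda-s)f_{m,m}(\lambda,s)$ is a rational function with denominator $q_m(\lambda):=\prod_{j=1}^m(\lambda-s_j)$, numerator of degree at most $m$, and the $m$ zeros $\lambda_1,\dots,\lambda_m$, so it equals $c(s)\,g_m(\lambda)$ for a scalar $c(s)$; letting $\lambda\to s$ forces $c(s)=g_m(s)^{-1}$, giving
\[
(\lambda-s)\,f_{m,m}(\lambda,s)=1-\frac{g_m(\lambda)}{g_m(s)} .
\]
(This is the skeleton-error formula of Tyrtyshnikov and may simply be cited instead of reproved.)

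\noindent Multiplying the definition \eqref{fskel} by $(\lambda-s)$ and substituting this identity makes the two $g_m$-terms cancel, leaving the compact formula
\[
(\lambda-s)\,f_{2m,m}(\lambda,s)=1-\frac{g_{2m}(\lambda)}{g_{2m}(s)} .
\]
This is precisely the asserted relative error $\epsilon(\lambda,s)=1-(\lambda-s)f_{2m,m}(\lambda,s)=g_{2m}(\lambda)/g_{2m}(s)$. The interpolation claim is then immediate: $g_{2m}$ vanishes at $\lambda_1,\dots,\lambda_{2m}$, so $\epsilon(\lambda_i,s)=0$ and $f_{2m,m}(\lambda_i,s)=(\lambda_i-s)^{-1}$; and $g_{2m}$ has poles at $s_1,\dots,s_m$, so $g_{2m}(s_j)^{-1}=0$, whence $\epsilon(\lambda,s_j)=0$ and $f_{2m,m}(\lambda,s_j)=(\lambda-s_j)^{-1}$.

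\noindent For the degree statements I would write $g_{2m}(z)=p_{2m}(z)/q_m(z)$ with $p_{2m}(z)=\prod_{i=1}^{2m}(z-\lambda_i)$ monic of degree $2m$ and $q_m(z)=\prod_{j=1}^m(z-s_j)$ monic of degree $m$, and put the compact formula over a common denominator:
\[
f_{2m,m}(\lambda,s)=\frac{g_{2m}(s)-g_{2m}(\lambda)}{(\lambda-s)\,g_{2m}(s)}
=\frac{p_{2m}(s)\,q_m(\lambda)-p_{2m}(\lambda)\,q_m(s)}{(\lambda-s)\,q_m(\lambda)\,p_{2m}(s)} .
\]
The bivariate polynomial $N(\lambda,s):=p_{2m}(s)q_m(\lambda)-p_{2m}(\lambda)q_m(s)$ vanishes on the diagonal $\lambda=s$, hence $(\lambda-s)\mid N$; write $N=(\lambda-s)\widetilde N$. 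In the variable $\lambda$, $N$ has degree $2m$ (coming from $p_{2m}(\lambda)q_m(s)$), so $\widetilde N$ has $\lambda$-degree $2m-1$; together with the denominator $q_m(\lambda)$ of degree $m$ this exhibits $f_{2m,m}$ as an $[(2m-1)|m]$ rational function of $\lambda$. In the variable $s$, $N$ has degree $2m$ (coming from $p_{2m}(s)q_m(\lambda)$), so $\widetilde N$ has $s$-degree $2m-1$; together with the denominator $p_{2m}(s)$ of degree $2m$ this exhibits $f_{2m,m}$ as an $[(2m-1)|2m]$ rational function of $s$.

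\noindent The only genuinely substantive step is the first one — establishing (or correctly invoking) the skeleton-error identity for $f_{m,m}$; everything afterwards is algebraic bookkeeping. A minor point to dispatch along the way is that all the evaluations above presuppose the eigenvalues $\lambda_1,\dots,\lambda_{2m}$ of $\mathcal{T}_{2m}$ to be distinct from the shifts $s_1,\dots,s_m$, which holds in the generic situation considered here; should $\mathcal{T}_{2m}$ have repeated eigenvalues, the interpolation is understood in the confluent (Hermite) sense and the polynomial identities above are unaffected.
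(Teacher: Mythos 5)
Your proposal is correct: the cancellation $(\lambda-s)f_{2m,m}=1-g_{2m}(\lambda)/g_{2m}(s)$ follows immediately from multiplying \eqref{fskel} by $(\lambda-s)$ and inserting the skeleton error identity \eqref{ErrSke}, and your compact representation $f_{2m,m}(\lambda,s)=\bigl(p_{2m}(s)q_m(\lambda)-p_{2m}(\lambda)q_m(s)\bigr)/\bigl((\lambda-s)\,q_m(\lambda)\,p_{2m}(s)\bigr)$ with $(\lambda-s)$ dividing the numerator does deliver the $[(2m-1)|m]$ and $[(2m-1)|2m]$ types and both families of interpolation conditions. The paper rests on the same key ingredient (the Tyrtyshnikov/Druskin--Knizhnerman relative-error formula \eqref{ErrSke}, which it cites rather than proves, as you also propose), but it organizes the argument in the opposite order: it first establishes the degree type by explicitly factoring the correction term in \eqref{fskel} through an auxiliary bivariate polynomial $\psi$, extracting $\phi$ with $\psi=(\lambda-s)\phi$ of bidegree $(2m-1,2m-1)$, and rewriting $f_{m,m}$ over the common denominator $q_m(\lambda)\,p_{2m}(s)$ via a further polynomial $\varphi$; it then verifies interpolation at the extra nodes $\lambda_{m+1},\ldots,\lambda_{2m}$ case by case using \eqref{ErrSke} (declaring the nodes $\lambda_1,\ldots,\lambda_m$ and $s_1,\ldots,s_m$ clear), and only at the end performs the cancellation you start from to get $\epsilon(\lambda,s)=g_{2m}(\lambda)/g_{2m}(s)$. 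Your route is shorter and more unified: deriving the error identity first makes the interpolation at all $2m$ nodes $\lambda_i$ and at the poles $s_j$ a one-line consequence, and the degree count comes from a single explicit numerator rather than the $\psi,\phi,\varphi$ bookkeeping; it also makes explicit the $s$-interpolation (via $1/g_{2m}(s_j)=0$, valid since $p_{2m}(s_j)\neq 0$) that the paper leaves implicit. Your closing remark on the genericity assumption (eigenvalues of $\mathcal{T}_{2m}$ distinct from the shifts, repeated nodes handled in the confluent sense) is a reasonable caveat that the paper does not state.
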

\begin{proof}
The rational function $f_{2m,m}$ can be expressed as 
\[
f_{2m,m}(\lambda,s)=f_{m,m}(\lambda,s)-\dfrac{1}{\lambda-s}\dfrac{\psi(\lambda,s)}{(\lambda-s_1)\ldots(\lambda-s_m)(s-\lambda_1)\ldots(s-\lambda_{2m})}
\]
where 
\[
\psi(\lambda,s)=(\lambda-\lambda_1)\ldots(\lambda-\lambda_m)(s-s_1)\ldots(s-s_m)[(\lambda-\lambda_{m+1})\ldots(\lambda-\lambda_{2m}-(s-\lambda_{m+1})\ldots(s-\lambda_{2m})]
\]
Observe that $\psi(\lambda,s)$ is divisible by $(\lambda-s)$, then there exists a function $\phi$ such that $\psi(\lambda,s)=(\lambda-s)\phi(\lambda,s)$. Moreover, $\phi$ is a polynomial function of degree $2m-1$ of each variable. Then $f_{2m,m}$ simplifies to 
\begin{align*}
    f_{2m,m}(\lambda,s)&=f_{m,m}(\lambda,s)-\dfrac{\phi(\lambda,s)}{(\lambda-s_1)\ldots(\lambda-s_m)(s-\lambda_1)\ldots(s-\lambda_{2m})}\\
    &=\dfrac{f_{m,m}(\lambda,s)(\lambda-s_1)\ldots(\lambda-s_m)(s-\lambda_1)\ldots(s-\lambda_{2m})-\phi(\lambda,s)}{(\lambda-s_1)\ldots(\lambda-s_m)(s-\lambda_1)\ldots(s-\lambda_{2m})}
\end{align*}
The relative error \cite{DruskinKnizhnerman} of the skeleton approximation $f_{m,m}$ is given by
\begin{align}\label{ErrSke}
    1-(\lambda-s)f_{m,m}(\lambda,s)=\dfrac{g_m(\lambda)}{g_m(s)}.
\end{align}
According to this equality, we can show that there exists a polynomial function $\varphi$, of degree $m-1$ (of each variable) such that 
\[
f_{m,m}(\lambda,s)(\lambda-s_1)\ldots(\lambda-s_m)(s-\lambda_1)\ldots(s-\lambda_{2m})=\varphi(\lambda,s)(s-\lambda_{m+1})\ldots(s-\lambda_{2m}).
\]
Thus, 
\[
f_{2m,m}(\lambda,s)=\dfrac{\varphi(\lambda,s)(s-\lambda_{m+1})\ldots(s-\lambda_{2m})-\phi(\lambda,s)}{(\lambda-s_1)\ldots(\lambda-s_m)(s-\lambda_1)\ldots(s-\lambda_{2m})}.
\]
Which shows that $f_{2m,m}$ is an $[(2m-1)|m]$ of the first variable $\lambda$ and an $[(2m-1)|2m]$ of the second variable $s$. It is clear that $f_{2m,m}$ interpolates $(\lambda-s)^{-1}$ at $\lambda=\lambda_1,\ldots,\lambda_m$ and $s=s_1,\ldots,s_m$. For $\lambda=\lambda_{m+1},\ldots,\lambda_{2m}$, we have
\begin{align*}
    f_{2m,m}(\lambda_i,s)&=f_{m,m}(\lambda_i,s)-\dfrac{1}{\lambda_i-s}\bigg[\dfrac{g_{2m}(\lambda_i)}{g_{2m}(s)}-\dfrac{g_{m}(\lambda_i)}{g_{m}(s)}\bigg].\\
    &=f_{m,m}(\lambda_i,s)+\dfrac{1}{\lambda_i-s}\dfrac{g_{m}(\lambda_i)}{g_{m}(s)}.\\
    \intertext{Using the relation error equation \eqref{ErrSke}, we get}
    &=\dfrac{1}{\lambda_i-s}\bigg[1-\dfrac{g_{m}(\lambda_i)}{g_{m}(s)}\bigg]+\dfrac{1}{\lambda_i-s}\dfrac{g_{m}(\lambda_i)}{g_{m}(s)}=\dfrac{1}{\lambda_i-s}.
\end{align*}
Which means that $f_{2m,m}$ interpolates $(\lambda-s)^{-1}$ at $\lambda=\lambda_1,\ldots,\lambda_{2m}$ and $s=s_1,\ldots,s_m$. The relative error is 
\begin{align*}
    1-(\lambda-s)f_{2m,m}(\lambda,s)&=1-(\lambda-s)\bigg[f_{m,m}(\lambda,s)-\dfrac{1}{\lambda-s}\bigg(\dfrac{g_{2m}(\lambda)}{g_{2m}(s)}-\dfrac{g_{m}(\lambda)}{g_{m}(s)}\bigg)\bigg].\\
    &=1-(\lambda-s)f_{m,m}(\lambda,s)+\dfrac{g_{2m}(\lambda)}{g_{2m}(s)}-\dfrac{g_{m}(\lambda)}{g_{m}(s)}.\\
   \intertext{Using the relation error equation \eqref{ErrSke}, we conclude that}
   1-(\lambda-s)f_{2m,m}(\lambda,s)=\dfrac{g_{2m}(\lambda)}{g_{2m}(s)}.
\end{align*}\qed
\end{proof}

\noindent Using the same techniques as in \cite{Knizhnerman}, we can show that 
\begin{equation*}
    Z_{2m}(\sigma)=f_{2m,m}(A,\sigma)R_0. 
\end{equation*}
By this equality, the residual $R_{2m}(\sigma)$ can be expressed as 
\begin{equation}\label{ErrSk}
    R_{2m}(\sigma)=R_0-(A-\sigma I_n)f_{2m,m}(A,\sigma)R_0=\dfrac{g_{2m}(A)R_0}{g_{2m}(\sigma)}
\end{equation}
From \cite[Proposition 2]{Datta}, the characteristic polynomial of $\mathcal{T}_{2m}$ minimizes $\| p(A)R_0\|_F$ over all monic polynomial of degree $2m$, so that the numerator in \eqref{ErrSk} satisfies
\begin{equation*}
    \|g_{2m}(A)R_0\|_F=\min\limits_{\lambda_1,\ldots,\lambda_{2m}}\|(A-\lambda_1 I_n)\ldots(A-\lambda_{2m} I_n)(A-s_1 I_n)^{-1}\ldots(A-s_m I_n)^{-1}R_0\|_F
\end{equation*}
With this result, and \eqref{ErrSk} equation, the next shift parameter $s_{m+1}$ is selected as
\begin{equation*}
   s_{m+1}=\argmax\limits_{\sigma\in \Sigma} \dfrac{1}{|g_{2m}(\sigma)|}
\end{equation*}
where $\Sigma$ is the set of the shifts associated to the parameterized linear systems \eqref{SL}. The following result on the norm of the residual $R_{2m}(\sigma)$ allows us to stop the iterations without having to compute matrix products with
the large matrix $A$.
\begin{theorem}
Let $Y_{2m}(\sigma)$ be the exact solution of the reduced linear system \eqref{RLS} and let  $X_{2m}(\sigma)$ be the approximate solution of linear system \eqref{SL} after $m$ iterations of the extended rational global Arnoldi algorithm. Then the residual $R_{2m}(\sigma)$ satisfies 
\begin{equation}\label{Resi}
    \|R_{2m}(\sigma)\|_F=\|R_{0}(\sigma)\|_F\|\tau_{2m}E^T_m(\mathcal{T}_{2m}-\sigma I_{2m})^{-1}e_1\|_F
\end{equation}
where $\tau_{2m}=\begin{bmatrix}t_{2m+1,2m-1}\,,&\,t_{2m+1,2m}\end{bmatrix}$
\end{theorem}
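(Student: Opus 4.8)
The plan is to start from the definition of the residual, substitute the Arnoldi decomposition \eqref{As}, and let the Galerkin condition from \eqref{RLS} make the bulk of the expression collapse to a single rank-one term.

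First I would write $R_{2m}(\sigma)=B-(A-\sigma I_n)X_{2m}(\sigma)$ and use the representation $X_{2m}(\sigma)=X_0+\mathcal{V}_{2m}(Y_{2m}(\sigma)\otimes I_p)$ from \eqref{OC} together with $R_0(\sigma)=B-(A-\sigma I_n)X_0$, obtaining
\[
R_{2m}(\sigma)=R_0(\sigma)-(A-\sigma I_n)\mathcal{V}_{2m}(Y_{2m}(\sigma)\otimes I_p).
\]
Next I would insert the decomposition \eqref{As}. Using the Kronecker identity $(M\otimes I_p)(N\otimes I_p)=(MN)\otimes I_p$, the second term becomes $\mathcal{V}_{2m}\big(((\mathcal{T}_{2m}-\sigma I_{2m})Y_{2m}(\sigma))\otimes I_p\big)+V_{2m+1}\big((\tau_{2m}E_m^TY_{2m}(\sigma))\otimes I_p\big)$. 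Since the basis is generated from the pair $(A,R_0)$ we have $V_1=R_0(\sigma)/\|R_0(\sigma)\|_F$, hence $R_0(\sigma)=\mathcal{V}_{2m}(\|R_0(\sigma)\|_F e_1\otimes I_p)$; and by the reduced system \eqref{RLS}, $(\mathcal{T}_{2m}-\sigma I_{2m})Y_{2m}(\sigma)=\|R_0(\sigma)\|_F e_1$. Therefore the leading term of $R_{2m}(\sigma)$ exactly cancels the $\mathcal{V}_{2m}$-part, leaving
\[
R_{2m}(\sigma)=-V_{2m+1}\big((\tau_{2m}E_m^TY_{2m}(\sigma))\otimes I_p\big).
\]

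Finally I would observe that $\tau_{2m}E_m^TY_{2m}(\sigma)$ is a scalar (a $1\times2$ times $2\times2m$ times $2m\times1$ product), so $(\tau_{2m}E_m^TY_{2m}(\sigma))\otimes I_p$ is that scalar times $I_p$; taking Frobenius norms and using $\|V_{2m+1}\|_F=1$ gives $\|R_{2m}(\sigma)\|_F=|\tau_{2m}E_m^TY_{2m}(\sigma)|$. Substituting $Y_{2m}(\sigma)=\|R_0(\sigma)\|_F(\mathcal{T}_{2m}-\sigma I_{2m})^{-1}e_1$ from \eqref{RLS} and recalling that the Frobenius norm of a scalar is its absolute value yields \eqref{Resi}.

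There is no serious obstacle here beyond careful Kronecker-product bookkeeping; the only things worth making fully explicit are the two facts that drive the cancellation — that $R_0(\sigma)$ equals the first basis block up to the scaling $\|R_0(\sigma)\|_F$, and that $Y_{2m}(\sigma)$ solves \eqref{RLS} exactly. One should also note in passing that the manipulation presupposes $\mathcal{T}_{2m}-\sigma I_{2m}$ nonsingular (so that $Y_{2m}(\sigma)$ is well defined), which is implicit in the hypothesis of the theorem.
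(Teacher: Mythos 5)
Your proposal is correct and follows essentially the same route as the paper: substitute $X_{2m}=X_0+\mathcal{V}_{2m}(Y_{2m}\otimes I_p)$, apply the decomposition \eqref{As}, and let the reduced system \eqref{RLS} together with $V_1=R_0/\|R_0\|_F$ cancel the $\mathcal{V}_{2m}$-part, leaving a multiple of $V_{2m+1}$. In fact your write-up is slightly more careful than the paper's, which drops the factor $Y_{2m}$ (and a sign) in the trailing term $V_{2m+1}\bigl((\tau_{2m}E_m^TY_{2m})\otimes I_p\bigr)$ and leaves the final passage to the norm identity \eqref{Resi} (via $\|V_{2m+1}\|_F=1$ and $Y_{2m}=\|R_0\|_F(\mathcal{T}_{2m}-\sigma I_{2m})^{-1}e_1$) implicit.
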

\begin{proof}
\begin{align}
    R_{2m}(\sigma)&=B-(A-\sigma I_n)X_{2m}=B-(A-\sigma I_n)(X_0+\mathcal{V}_{2m}(Y_{2m}\otimes I_p)).\nonumber\\
    \intertext{Using \eqref{As} decomposition, we obtain }
    &=R_0-[\mathcal{V}_{2m}((\mathcal{T}_{2m}-\sigma I_{2m})\otimes I_p+V_{2m+1}(\tau_{2m}E^T_m\otimes I_p)](Y_{2m}\otimes I_p).\nonumber\\
    &=R_0-\mathcal{V}_{2m}[(\mathcal{T}_{2m}-\sigma I_{2m})Y_{2m}\otimes I_p]-V_{2m+1}(\tau_{2m}E^T_m\otimes I_p).\nonumber\\
    &=R_0-\mathcal{V}_{2m}(\|R_{0}(\sigma)\|_Fe_1\otimes I_p)-V_{2m+1}(\tau_{2m}E^T_m\otimes I_p).\nonumber\\
    &=V_{2m+1}(\tau_{2m}E^T_m\otimes I_p).\label{RRR}
\end{align}
\end{proof}

\noindent As $m$ increases, the column of block vectors that must be stored increases. As in \cite{SAAD,SIMONCINI}, we can restart the algorithm every some fixed $m$ steps. According to \eqref{RRR}, we observe that the residuals $R_{2m}(\sigma)$ are colinear to the block vector $V_{2m+1}$. Then it is possible to restart with $V_{2m+1}$ and $\beta_0(\sigma)=\text{trace}(V_{2m+1}^TR_{2m}(\sigma))$ see, Algorithm  \ref{alg:RestartedSLS} [line $8$].
\begin{algorithm}
\caption{Restarted shifted linear system algorithm}\label{alg:RestartedSLS}
Input: Matrix $A$, block vector $B$, $\Sigma$ the set of shifts, $\epsilon$ a desired tolerance.
\begin{enumerate}
\item Set $\beta_0(\sigma)=\|B\|_F,$ $V_1=B/\beta_0(\sigma)$ and $\Sigma_c=\emptyset$.
\item Compute $\mathcal{V}_{2m}$ and $\mathcal{T}_{2m}$  using the global extended-rational Arnoldi algorithm \ref{alg:Arnoldi}.
\item Solve the reduced shifted linear system  $(\mathcal{T}_{2m}-\sigma I_{2m})Y_{2m}=\beta_0(\sigma)e_1,$ for $\sigma\in\Sigma\backslash\Sigma_c$.
\item Compute $\|R_{2m}(\sigma)\|_F$ using \eqref{Resi}, for $\sigma\in\Sigma\backslash\Sigma_c$.
\item Compute $X_{2m}(\sigma)=X_{2m}(\sigma)+\mathbb{V}_{2m}(Y_{2m}(\sigma)\otimes I_p),$ for $\sigma\in\Sigma\backslash \Sigma_c$. 
\item Select the new $\sigma\in\Sigma\backslash\Sigma_c$ such that $\|R_{2m}(\sigma)\|_F<\epsilon$. Update set $\Sigma_c$ of converged shifted systems.
\item if $\Sigma\backslash\Sigma_c=\emptyset$ Stop
\item else Set $V_1=V_{2m+1}$ and $\beta_0(\sigma)=\text{trace}(V_1^TR_{2m}(\sigma)),$ for $\sigma\in\Sigma\backslash \Sigma_c.$
\end{enumerate}
\end{algorithm}

\subsection{Application to the approximation of $e^{-tA}V,$ $t>0$}
In this subsection, we consider the computation of $U(t)=e^{-tA}V$ where $t>0$, for a given large and sparse matrix $A$ and a given block vector $V$ of size $n \times p$. Based on the $F$-orthonormal basis defining the matrix  $\mathcal{V}_{2m}$ generated by the global extended-rational algorithm, the expression of $U(t)$ can be approximated as
\begin{align}\label{exp}
U_{2m}(t)=\|V\|_F\mathcal{V}_{2m}((e^{-t\mathcal{T}_{2m}}e_1)\otimes I_p).
\end{align}
Indeed, the inverse Laplace representation of the resolvent function is written as follows
\begin{align*}
    e^{-tA}V=\dfrac{1}{2\pi i}\int^{i\infty}_{-i\infty}e^{ts}(A+sI_n)^{-1}Vds.
\end{align*} We have seen that the approximation of $(A+sI)^{-1}V$ is 
\[
\mathcal{P}_{2m}((A+sI)^{-1}V)=\|V\|_F\mathcal{V}_{2m}((\mathcal{T}_{2m}+sI)^{-1}e_1\otimes I_p).
\]
Then,
\begin{align*}
    \mathcal{P}_{2m}(e^{-tA}V)&=\dfrac{1}{2\pi i}\int^{i\infty}_{-i\infty}e^{ts}\mathcal{P}_{2m}((A+sI)^{-1}V)ds.\\
    &=\dfrac{1}{2\pi i}\int^{i\infty}_{-i\infty}e^{ts}\|V\|_F\mathcal{V}_{2m}((\mathcal{T}_{2m}+sI)^{-1}e_1\otimes I_p)ds.\\
    &=\|V\|_F\mathcal{V}_{2m}\bigg(\dfrac{1}{2\pi i}\int^{i\infty}_{-i\infty}e^{ts}((\mathcal{T}_{2m}+sI)^{-1}e_1)ds\otimes I_p\bigg).\\
    &=\|V\|_F\mathcal{V}_{2m}(e^{-t\mathcal{T}_{2m}}e_1\otimes I_p).
\end{align*}
We recall that $U(t)=e^{-tA}V$ is the solution of the differential problem 
\begin{equation}\label{Ut}
\begin{array}{cccc}
    U^{'}(t)&=&-AU(t),\quad &t>0\\
    U(0)&=&V,\quad &U(\infty)=0.
    \end{array}
\end{equation}
The residual with respect to this ODE is given by 
\begin{align*}
    R_{2m}(t)=U^{'}_{2m}(t)+AU_{2m}(t).
\end{align*}
Following the idea in \cite{SAAD1}, and by the first equation in \eqref{AT}, the residual is given by the quantity
\begin{align*}
    R_{2m}(t)=\mathcal{V}_{2m}(\tau_mE^T_me^{-t\mathcal{T}_{2m}}e_1\otimes I_p)
\end{align*}
Applying the first result of \cite[Lemma $1$]{Jbilou2} to $R_{2m}(t)$, we obtain the following stopping criterion \begin{align}\label{Rm}
    \|R_{2m}(t)\|_F=\|\tau_mE^T_me^{-t\mathcal{T}_{2m}}e_1\|_F
\end{align}
where $\tau_m=\begin{bmatrix}t_{2m+1,2m-1}\,,&\,t_{2m+1,2m}\end{bmatrix}$ and $E_m$ are defined in \eqref{AT}.\\ 
The following result which concerns the approximation of the exponential will be the key to find a good choice of the shift parameters independently on the parameter $t$. This result is obtained by following some ideas in \cite{DruskinKnizhnerman}.
\begin{theorem}
We assume that $A$ is a positive matrix with spectrum contained on  $[0,\infty)$. Then we have the following error bound
\begin{align}\label{AA}
    \sup\limits_{t\in[0,\infty[}\|e^{-tA}V-U_{2m}(t)\|_F\leq\|g_{2m}(A)V\|_F\max\limits_{s\in i\mathbb{R}}\dfrac{1}{|g_{2m}(-s)|}
\end{align}
\end{theorem}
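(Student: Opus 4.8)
The plan is to reduce the bound to the resolvent case, which is already controlled by the skeleton‑approximation analysis, and from there to a scalar Paley--Wiener‑type estimate. First, combining the inverse Laplace representation of $e^{-tA}V$ with the linearity of the projector $\mathcal{P}_{2m}$ and the identity $U_{2m}(t)=\mathcal{P}_{2m}(e^{-tA}V)$ (all established above), I would write
\[
e^{-tA}V-U_{2m}(t)=\frac{1}{2\pi i}\int_{-i\infty}^{i\infty}e^{ts}\Big[(A+sI_n)^{-1}V-\mathcal{P}_{2m}\big((A+sI_n)^{-1}V\big)\Big]\,ds .
\]
Applying the shifted‑linear‑system analysis with $\sigma=-s$, $B=V$ and $X_0=0$ (so that $R_0=V$), formula \eqref{ErrSk} identifies the bracketed term with $g_{2m}(-s)^{-1}(A+sI_n)^{-1}g_{2m}(A)V$; since $g_{2m}(A)$ commutes with the resolvent, the error equals $\chi_t(A)\,g_{2m}(A)V$, where $\chi_t$ is the scalar function
\[
\chi_t(\mu)=\frac{1}{2\pi i}\int_{-i\infty}^{i\infty}\frac{e^{ts}}{g_{2m}(-s)\,(\mu+s)}\,ds .
\]

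Next I would pass to the spectrum. Because $A$ is positive it is symmetric with $\mathrm{spec}(A)\subset[0,\infty)$, and $\mathcal{T}_{2m}$, being a compression of $A$, is symmetric positive semidefinite; hence its eigenvalues $\lambda_1,\dots,\lambda_{2m}$ — the zeros of the numerator of $g_{2m}$ — also lie in $[0,\infty)$. Since $\chi_t(A)$ is a function of the normal matrix $A$,
\[
\|e^{-tA}V-U_{2m}(t)\|_F=\|\chi_t(A)\,g_{2m}(A)V\|_F\le\Big(\max_{\mu\in\mathrm{spec}(A)}|\chi_t(\mu)|\Big)\,\|g_{2m}(A)V\|_F ,
\]
so the theorem reduces to the scalar claim $\sup_{t\ge0}\sup_{\mu\ge0}|\chi_t(\mu)|\le\max_{s\in i\mathbb{R}}|g_{2m}(-s)|^{-1}=:M$.

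For the scalar claim, assume first that $A$ (hence $\mathcal{T}_{2m}$) is positive definite, so all $\lambda_j>0$ and $1/g_{2m}(-\cdot)$ is a proper rational function analytic and bounded on $\{\mathrm{Re}\,s\ge-\delta\}$ for some $\delta>0$; let $W:=\mathcal{L}^{-1}[1/g_{2m}(-\cdot)]$, a finite sum of decaying exponentials on $[0,\infty)$ with $\int_0^\infty W(v)e^{-iyv}\,dv=1/g_{2m}(-iy)$, so that $\sup_{y}\big|\int_0^\infty W(v)e^{-iyv}\,dv\big|=M$. Writing $1/(\mu+s)=\int_0^\infty e^{-(\mu+s)u}\,du$ on the imaginary axis and interchanging integrals yields $\chi_t(\mu)=\int_0^\infty e^{-\mu u}W(t-u)\,du=\int_0^t e^{-\mu(t-v)}W(v)\,dv$. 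It remains to bound this truncated convolution by $M$ uniformly in $t,\mu\ge0$, and this is the main obstacle. I would argue by the maximum principle: $\chi_t(\cdot)$ extends to an entire function which on the closed right half‑plane satisfies $|\chi_t(\mu)|\le\int_0^t|W(v)|\,dv$ (because $|e^{-\mu(t-v)}|\le1$ there for $v\le t$), so Phragm\'en--Lindel\"of gives $\max_{\mathrm{Re}\,\mu\ge0}|\chi_t(\mu)|=\sup_{y}\big|\int_0^t W(v)e^{-iyv}\,dv\big|$, and one is left to show that the truncated Fourier integrals of $W$ never exceed the supremum $M$ of the full one — the step that uses the analyticity of $1/g_{2m}(-\cdot)$ in the right half‑plane and is precisely the argument of \cite{DruskinKnizhnerman}. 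The semidefinite case ($\lambda_{\min}(A)=0$) then follows by applying the definite case to $A+\varepsilon I_n$ and letting $\varepsilon\to0^+$, using continuity of both sides. Assembling the three steps and taking the supremum over $t$ gives \eqref{AA}.
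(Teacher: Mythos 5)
Your error representation coincides with the paper's: both proofs insert the inverse Laplace formula for $e^{-tA}V$, identify the projected resolvent $\|V\|_F\mathcal{V}_{2m}[(\mathcal{T}_{2m}+sI)^{-1}e_1\otimes I_p]$ with $f_{2m,m}(A,-s)V$, and use the relative-error identity $1-(\lambda-s)f_{2m,m}(\lambda,s)=g_{2m}(\lambda)/g_{2m}(s)$ to arrive at
\[
e^{-tA}V-U_{2m}(t)=\frac{1}{2\pi i}\int_{-i\infty}^{i\infty}e^{ts}(A+sI_n)^{-1}\frac{g_{2m}(A)V}{g_{2m}(-s)}\,ds .
\]
Up to this point you are on the paper's track. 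You diverge at the final estimate: the paper bounds $|g_{2m}(-s)|^{-1}$ by its maximum $M$ over $i\mathbb{R}$, pulls that factor out, and recognizes what survives on the spectrum as $h(\lambda,t)=\frac{1}{2\pi i}\int_{-i\infty}^{i\infty}e^{ts}(\lambda+s)^{-1}\,ds=e^{-\lambda t}\le 1$ for $\lambda,t\ge 0$ --- one line and done (formally loose, since the contour integral is not absolutely convergent, but that is the step the paper takes). You instead keep $1/g_{2m}(-s)$ inside the contour integral, define $\chi_t(\mu)$, rewrite it as the truncated convolution $\int_0^t e^{-\mu(t-v)}W(v)\,dv$, and reduce the theorem to the scalar claim $\sup_{t,\mu\ge0}|\chi_t(\mu)|\le M$.

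That scalar claim is where your argument stops being a proof, and you say so yourself (``this is the main obstacle''). After the Phragm\'en--Lindel\"of reduction you are left needing $\sup_{y\in\mathbb{R}}\bigl|\int_0^t W(v)e^{-iyv}\,dv\bigr|\le M$ for every $t>0$, i.e.\ that truncation of $W$ to $[0,t]$ does not increase the sup-norm of its Fourier--Laplace transform. This is not a routine fact: truncation $W\mapsto W\,1_{[0,t]}$ is \emph{not} in general a contraction in this norm (it is the half-line analogue of the unboundedness of the Riesz projection on $L^{\infty}$), so any valid argument must exploit the specific structure of $1/g_{2m}(-\cdot)$ --- a strictly proper rational function with all poles at the real points $-\lambda_j\le 0$. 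Deferring to \cite{DruskinKnizhnerman} names where such an argument might live but does not supply it, so as written the proposal has a genuine gap at its decisive step. Either prove the truncated-transform inequality for this class of $W$, or fall back on the paper's shortcut: bound $|g_{2m}(-s)|^{-1}$ pointwise on the contour by $M$ and observe that the remaining integral is exactly the inverse Laplace transform of the resolvent, hence $e^{-\lambda t}$ with modulus at most one on $\mathrm{spec}(A)\subset[0,\infty)$.
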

\begin{proof}
\begin{align*}
    e^{-tA}V-U_{2m}(t)&=e^{-tA}V-\|V\|_F\mathcal{V}_{2m}((e^{-t\mathcal{T}_{2m}}e_1)\otimes I_p)\\
    &=\dfrac{1}{2\pi i}\int^{i\infty}_{-i\infty}e^{ts}(A+sI_n)^{-1}[V-\|V\|_F(A+sI_n)\mathcal{V}_{2m}[(\mathcal{T}_{2m}+sI)^{-1}e_1\otimes I_p]]ds\\
    \intertext{We have\quad $\|V\|_F\mathcal{V}_{2m}[(\mathcal{T}_{2m}+sI_{2m})^{-1}e_1\otimes I_p]=f_{2m,m}(A,-s)$, then}
      &=\dfrac{1}{2\pi i}\int^{i\infty}_{-i\infty}e^{ts}(A+sI_n)^{-1}[V-(A+sI_n)f_{2m,m}(A,-s)V]ds\\
    &=\dfrac{1}{2\pi i}\int^{i\infty}_{-i\infty}e^{ts}(A+sI_n)^{-1}\dfrac{g_{2m}(A)V}{g_{2m}(-s)}ds\\
    \intertext{then}
    \|e^{-tA}V-U_{2m}(t)\|_F&\leq \dfrac{\|g_{2m}(A)V\|_F}{\min\limits_{s\in i\mathbb{R}}|g_{2m}(-s)|}\sup\limits_{\lambda\in[0,\infty)}|h(\lambda,t)|
    \end{align*}
    where $h(\lambda,t)=\dfrac{1}{2\pi i}\int^{i\infty}_{-i\infty}e^{st}(\lambda+s)^{-1}ds$. We observe that $h(\lambda,t)$ corresponds to the inverse Laplace transform  of $1/(\lambda+s)$, then $h(\lambda,t)=e^{-\lambda t}$. Which leads to obtain 
    \[
    \|e^{-tA}V-U_{2m}(t)\|_F\leq \dfrac{\|g_{2m}(A)V\|_F}{\min\limits_{s\in i\mathbb{R}}|g_{2m}(-s)|}
    \]
This shows \eqref{AA} since $\max\limits_{s\in i\mathbb{R}}\{1/|g_{2m}(-s)|\}=1/\min\limits_{s\in i\mathbb{R}}\{|g_{2m}(-s)|\}$ .
\end{proof}

\noindent As for rational Arnoldi approximation, and when working with bounded positive definite matrix $A$, Druskin et al.  \cite{Druskin} showed that real shifts on the spectrum of $A$ can also reach the minimum in \eqref{AA} inequality. We observe that the function $g^{-1}_{2m}(-s)$  has poles at $s=-\lambda_i\in[-\lambda_{max},-\lambda_{min}]$, $i=1,\ldots,2m$. Following the same techniques in \cite[Proposition 2.3]{Druskin}, we can show that all the extrema of $|g_{2m}^{-1}(-s)|$ are ripples (local maxima of $|g_{2m}^{-1}(-s)|$) located only between the interpolation points $\{\lambda_i\}_{i=1}^{2m}$, such that there is one and only one ripple between two adjacent interpolation points. With the result, the next shift parameter $s_{m+1}$ is selected as the corresponding argument of the maximum of $|g^{-1}_{2m}(-\lambda_{max})|$, $|g^{-1}_{2m}(-\lambda_{min})|$ and the $2m-1$ local maxima between the interpolation points. The algorithm for constructing the next shift parameter is given in Algorithm \ref{alg:shiftpara}. Algorithm \ref{alg:exponential} describes how approximations of $e^{-tA}V$ are computed by the adaptive global extended-rational method.

\begin{algorithm}
\caption{The procedure for selecting the shift parameters of exponential function}\label{alg:shiftpara}
Inputs: $\{\lambda_i\}_{i=1}^{2m}$ the set of interpolation points (the eigenvalues of $\mathcal{T}_{2m}$).
\begin{enumerate}
    \item Estimate $\lambda_{min}$ and $\lambda_{max}$.
    \item For $j=1:2m-1$
    \begin{enumerate}
        \item $\mu_j=\argmax\limits_{s\in]\lambda_j,\lambda_{j+1}[}\dfrac{1}{|g_{2m}(-s)|}$
        \item endfor
    \end{enumerate}
    \item  $s_{m+1}=\argmax\limits_{\mu_1,\ldots,\mu_{2m-1},\lambda_{min},\lambda_{max}}\bigg\{\dfrac{1}{|g_{2m}(-\mu_j)|},\dfrac{1}{|g_{2m}(-\lambda_{min})|},\dfrac{1}{|g_{2m}(-\lambda_{max})|}\bigg\}$
\end{enumerate}
\end{algorithm}
\begin{algorithm}
\caption{Approximation of $e^{-tA}V$ by the adaptive global extended-rational method (AGER)}\label{alg:exponential}
Inputs:  Matrix $A$, initial block $V$.
\begin{enumerate}
\item Choose a tolerance $tol>0$, a maximum number of $itermax$ iterations.
\item Estimate $\lambda_{min}$ and set $s_1=\lambda_{min}$.
\item For $m=1:  itermax$
\begin{enumerate}
\item Compute $\mathcal{V}_{2m}$ and $\mathcal{T}_{2m}$ using the global extended-rational Arnoldi algorithm \ref{alg:Arnoldi}.
\item  Compute $Y_{2m}=e^{-t\mathcal{T}_{2m}}e_1$, and compute $\|R_{2m}\|_F$ given by \eqref{Rm}.
\item if $\|R_{2m}\|_F\leq tol$, stop,
\item else Find $s_{m+1}$ by using Algorithm \ref{alg:shiftpara}.
\item endfor
The approximate solution $U_{2m}$ given by \eqref{exp}. 
\end{enumerate}
\end{enumerate}
\end{algorithm}

%\subsection{Application to the approximation of $A^{-\alpha}V,$ $0<\alpha<1$.}

%%%%X_m%%%%%n\times s%/%%%%%%%%%%%%%%%%%%%%%%%%%%%%%%%%%%%%%%%%%%%%%%%%%%%%%%%%

%%%%%%%%%%%%%%%%%%%%%%%%%%%%%%%%%%%%%%%%%%%%%%%%%%%%%%%%%%%%%%%%%%%

\newpage

\section{Numerical experiments}
In this section, we give some numerical results to show the performance of the global extended-rational Arnoldi method.  All experiments were  carried out with  MATLAB $R2015a$ on a computer with an Intel Core i-3 processor and 3.89 GBytes of RAM. The computations were  done with about $15$ significant decimal digits. The proposed method is applied to the approximation of $f(A)V$ given in \eqref{If}, and to solve the shifted linear systems \eqref{SL} with multiple right hand sides for many values of $\sigma$. 
\subsection{Examples for the shifted linear systems}
In this subsection, we present some results of solving  shifted linear systems of the form  \eqref{SL}. We compare the results obtained by the restarted global extended-rational Arnoldi (resGERA), the restarted  global extended Arnoldi (resGEA) and the restarted global FOM (resGFOM) methods. The right hand side $B$ was chosen randomly with entries uniformly distributed on $[0,1]$. The shifts $\sigma$ are taken to be  values uniformly distributed in the interval $[-5,0]$. In Example 1 and Example 2, the stopping criterion used for Algorithm \ref{alg:RestartedSLS} was $\|R_{2m}(\sigma)\|_F\leq 2\cdot 10^{-12}$ and the initial guess was zero. The dimension of the subspaces was chosen to be $m=10,20$.\\

\noindent {\bf Example 1}
In this experiment, we consider the nonsymmetric matrices $A_1$ and $A_2$ given in \cite{SIMONCINI} and \cite{ABIDI}, respectively. These matrices were obtained from the centered finite difference discretization (CFDD) of the elliptic  operators $\mathcal{L}_{1}(u)$ and $\mathcal{L}_{2}(u)$,  respectively, 
\begin{equation}\label{Op}
\begin{array}{ll}
\mathcal{L}_1(u)&=-\Delta u+50(x+y)u_x+50(x+y)u_y.\\
\mathcal{L}_2(u)&=-\Delta u+sin(xy)u_x+e^xu_y+(x+y)u.
\end{array}
\end{equation}
on the unit square $[0,1]\times [0,1]$ with Dirichlet homogeneous boundary conditions. The number of inner grid points in both directions was $n_0$ and the dimension of matrices is $n=n_0^2$.

\noindent In table \ref{tab:table1}, we reported results for resGERA, resGEA and resGFOM. We used different values of the dimension $n$ ($\{2500,10000$ and $22500\}$) and two different block sizes $p=5,10$. The dimension of the subspace is chosen to be $m=10$ and $m=20$. As shown from this table, the resGFOM requires a higher number of restarts and cpu-time to reach convergence. Although the resGEA is able to reduce the number of restarts,  resGERA is much better in terms of number of restarts and cpu-time.    
\begin{table}[h!]
\caption{Example $1$: Shifted solvers for nonsymmetric matrices and different matrix dimensions for the operators given by \eqref{Op}  }
\label{tab:table1}
\begin{tabular}{c|c|c|c|c|c}
\hline
$Matrices.$ & $n$ & subspace & GERAM & GEAM & GFOM \\
 & & dimension & Time$(s)$(\#Cycles) & Time$(s)$(\#Cycles) & Time$(s)$(\#Cycles)\\
\hline
$A_1$ &  $2500$ & $10$ & $5.12$ $(2)$ & $9.65$ $(6)$ & $17.56$ $(47)$ \\ 
 & $2500$ & $20$ &$2.81$ $(1)$ &$7.90$ $(4)$ & $38.75$ $(28)$ \\
$s=5$ & $10000$ &$10$ & $8.17$ $(2)$ & $17.23$ $(8)$ & $156.67$ $(57)$\\
 & $10000$ &$20$ & $7.35$ $(1)$ &$27.47$ $(5)$ & $171.39$ $(32)$\\
 & $22500$ &$10$ & $10.43$ $(2)$ & $27.36$ $(9)$ & $558.76$ $(94)$\\
 & $22500$ & $20$ & $18.76$ $(1)$ & $20.86$ $(6)$ & $555.96$ $(45)$\\
 \hline
  $A_2$ & $2500$ & $10$ & $17.78$ $(2)$ & $10.57$ $(11)$ & $367.14$ $(91)$ \\ 
 & $2500$ & $20$ &$17.18$ $(1)$ &$20.37$ $(4)$ & $538.48$ $(45)$ \\
 $s=10$& $10000$ &$10$ & $16.69$ $(2)$ & $27.11$ $(6)$ & -\\
 & $10000$ &$20$ & $14.45$ $(1)$ & $20.61$ $(4)$ &-\\
 & $22500$ &$10$ & $35.10$ $(2)$ & $50.12$ $(5)$&-\\
 &$22500$ &$20$ & $34.09$ $(1)$  & $42.51$ $(3)$ &-\\
 \hline
\end{tabular}
\end{table}

\noindent {\bf Example 2} In this example, we used the nonsymmetric matrices $pde2961$, $epb1$, $add32$ and the symmetric matrix $mhd3200b$ from the Suite Sparse Matrix Collection \cite{DAVIS}. Some details on these matrices are given in Table \ref{tab:table_proprety}. Results for several choices of the block size $p$ are reported in  Table \ref{tab:table2}. The results show that the GERAM and GEAM yield significantly smaller cycles than GFOM. Moreover, the  GERAM is faster than GEAM for all matrices.

\begin{table}[h!]
\caption{Suite Sparse Matrix Collection matrices information}
\label{tab:table_proprety}
\renewcommand*{\arraystretch}{1.4}
\begin{tabular}{c|c|c|c|c|c|c}
\hline
Matrices & $Original\, Problem$ & size $n$ & $\lambda_{min}$& $\lambda_{max}$ & $cond(A)$ & $nnz$\\
\hline
$pde2961$ & economic problem & $2961$ & $0.04$ & $12.12$ & $6.42 \cdot 10^2$ & $14585$\\
$epb1$ & thermal problem & $14734$ & $4.85\times 10^{-5}$ & $15.66$ & $5940.66$ & $95053$\\
$mhd3200b$ & electromagnetics Problem & $3200$ & $1.36 \times 10^{-13}$ & $2.19$ &  $1.60 \times 10^{13}$ &$18316$\\
$add32$ &  circuit simulation problem & $4960$ & $4.21\times 10^{-4}$ & $0.06$ & $1.36 \cdot 10^2$ & $19848$\\
\hline
\end{tabular}
\end{table}

\begin{table}[h!]
\caption{Example 1: Shifted solvers for some matrices from the Suite Sparse Matrix Collection matrices}
\label{tab:table2}
\begin{tabular}{l|c|c|c|c}
\hline
Test problem & subspace & GERAM & GEAM & GFOM \\
 & dimension & Time$(s)$(\#Cycles) & Time$(s)$(\#Cycles) & Time$(s)$(\#Cycles)\\
 \hline
 $A_1=pde2961$ & $10$ & $9.07$ $(8)$ & $10.57$ $(11)$ & $20.72$ $(91)$ \\ 
 $n=2961$ & $20$ &$8.10$ $(2)$ &$9.90$ $(5)$ & $23.03$ $(25)$ \\
 $s=5$ & & & &\\
 \hline
  $A_2=epb1$ & $10$ & $32.34$ $(6)$ &  $56.02$ $(15)$& $402$ $(700)$ \\
  $n=14734,$  & $20$ & $37.63$ $(2)$ &$68.10$ $(7)$ &$206$ $(270)$ \\
  $s=10$ & & & &\\
 \hline
 $A_3=mhd3200b$ & $10$ & $10.28$ $(9)$ &  $52.03$ $(82)$& $321$ $(583)$ \\
  $n=3200$  & $20$ & $7.84$ $(3)$ &$38.7$ $(21)$ &$149.36$ $(134)$ \\
  $s=10$ & & & &\\
  \hline
  $A_4=add32$ & $10$ & $5.34$ $(3)$ &  $7.14$ $(5)$& $9.21$ $(24)$ \\
  $n=4960$  & $20$ & $5.12$ $(1)$ &$6.27$ $(3)$ &$10.82$ $(10)$ \\
  $s=5$ & & & &\\
  \hline
 \end{tabular}
\end{table}

\subsection{Examples for the apporixmation of $f(A)V$}
\noindent {\bf Example 3} In this example, we consider a semi discretization of the partial differential equation 
\begin{equation*}
\begin{array}{rll}
\dfrac{\partial U}{\partial t}-\Delta U+(x+y)\dfrac{\partial U}{\partial x}+(x-y)\dfrac{\partial U}{\partial y}&=0 & on\,  (0,1)^2\times (0,1)\\
U(x,y,t)&=0 & on \, \partial (0,1)^2 \,\forall t\in[0,1]\\
U(x,y,0)&=U_0(x,y)&  \forall x,y\in[0,1]^2.
\end{array}
\end{equation*}
where \begin{align*}
 U_0(x,y)&=\{u^{(1)}_0(x,y),u^{(2)}_0(x,y),u^{(3)}_0(x,y)\}\\
 &=\{\sin(\pi x)\sin(\pi y),\sin(2\pi x)\sin(\pi y),\sin(2\pi x)\sin(2\pi y)\},    
 \end{align*}
We used the nonsymmtric matrices $A_{100}$ and $A_{150}$ coming from CFDD of the operator 
\begin{align}\label{L3}
 \mathcal{L}_3(u)=-\Delta u + (x+y)u_x+ (x-y)u_y.
 \end{align}
 on the $[0,1]\times[0,1]$. The size of $A_{100}$ is $100\times 100$ and the size of $A_{150}$ is $150\times 150$. The subscript $100$ and $150$ denotes the number of inner grid points in both directions. The block $V$ is set to the values of the initial functions $U_0(x,y)$ on the finite-difference mesh $(x_i,y_j)$, with $x_i=(i-1)/(n_0-1)$ and $y_j=(j-1)/(n_0-1)$, for $i,j=1,\ldots,n_0$, i.e., $V(n_0(i-1)+j,k)=u_0^{(k)}(x_i,y_j)$, $k=1,2,3$. In this case, the block size is $p=3$. We computed approximations of $U(t)=e^{-tA}V$ correspond to the solution of partial differential equation. These approximations are given by the AGER method; see, Algorithm \ref{alg:exponential} and the adaptive rational Arnoldi method (ARA) described in \cite{Druskin}. We used different values of time parameters $t=\{1/10,1/3,2/3,1\}$. 
 The algorithms were stopped when residual norm $\|R_{2m}(t)\|$ is less than $5\times 10^{-9}$. \\ In table \ref{tab:table4}, we present results of this experiment. As shown in this table, the AGER method requires fewer iterations and CPU-time than ARA method. 
 \begin{table}[h!]
\caption{Example 3: Approximation of $e^{-tA}V$ for two matrix dimensions for the operator given by \eqref{L3}. }
\label{tab:table4}
\begin{tabular}{l|c|c|c|c|c|c}
\hline
\multirow{2}{2cm}{Test problem} & \multicolumn{3}{c|}{Adaptive global extended-rational Arnoldi method} &
\multicolumn{3}{c|}{Adaptive rational Arnoldi method}\\
\cline{2-7}
& Sp. dimen. & Res. norm & Time(s) &
Sp. dimen. & Res. norm & Time(s)\\
\hline
$A_{100}$ & & & & & &  \\
$t=1/10$ & 50 & $2.15\,\times\,10^{-9}$ &
  5.77 & 100 &  $1.06\,\times\,10^{-9}$ & 108.09\\
$t=1/3$ & 40 & $5.85\,\times\,10^{-9}$ &
  4.72 & 95 &  $1.14\,\times\,10^{-9}$ & 83.20\\
$t=2/3$ & 28 & $1.19\,\times\,10^{-9}$ &
  2.98 & 60 &  $1.98\,\times\,10^{-9}$ & 24.53\\
$t=1$ & 16 & $1.94\,\times\,10^{-9}$ &
  2.13 & 32 &  $2.22\,\times\,10^{-9}$ & 10.06\\
\hline
$A_{150}$ & & & & & &  \\
$t=1/10$ & 54 & $3.26\,\times\,10^{-9}$ &
  13.48 & 100 &  $7.00\,\times\,10^{-7}$ & 275.16\\
$t=1/3$ & 46 & $3.77\,\times\,10^{-9}$ &
  11.37 & 100 &  $2.06\,\times\,10^{-6}$ & 274.45\\
$t=2/3$ & 30 & $1.87\,\times\,10^{-9}$ &
  7.72 & 96 &  $5.82\,\times\,10^{-9}$ & 260.12\\
$t=1$ & 30 & $1.29\,\times\,10^{-9}$ &
  6.02 & 50 &  $3.73\,\times\,10^{-9}$ & 56.71\\
  \hline
\end{tabular}
\end{table}

\noindent In the following examples, we compare the performance of GERA method with the performance of the rational arnoldi (RA) method and the standard global Arnoldi (SGA) method. In all examples, $A\in\mathbb{R}^{1000\times 1000}$, and the block $V\in\mathbb{R}^{n\times 5}$ was generated randomly with entries uniformly distributed on $[0,1]$. The dimension of the Krylov subspace is chosen $m=20$. We determine the actual value $\mathcal{I}(f)$ given by \eqref{If} using funm function in MATLAB. In the tables, we display the errors $Er(f^{SGA}_m)=\|\mathcal{I}(f)-f^{SA}_m\|$, for the SA method $Er(f^{ER}_{m/2})=\|\mathcal{I}(f)-f^{ER}_{m/2}\|$ for the GERA method and $Er(f^{RA}_m)=\|\mathcal{I}(f)-f^{RA}_m\|$ for the RA method. In the extended-rational method, the poles are chosen as $s_i=0.1i$ for $i=1,\ldots,10$, while in the rational method the poles are chosen as $s_i=0.05i$ for $i=1,\ldots,20$. 

\noindent {\bf Example 4}  
Let $A=[a_{i,j}]$ be the symmetric positive definite Toeplitz matrix with entries $a_{i,j}=1/(1+|i+j|)$ \cite{Jagels}. Results for several functions are reported in Table \ref{tab:table5}. As shown, the approximations computed with the GERA method are more accurate than approximations determined by the RA and SGA methods.
\begin{table}[h!]
\caption{Example 4: $A\in\mathbb{R}^{n\times n}$ is a symmetric positive definite Toeplitz matrix with $n=1000$. Block size $p=5$.} \label{tab:table5}
\centering
\begin{tabular}{l|c|c|c}
\hline
 $f(x)$ & $Er(f^{ER}_{m/2})$ & $Er(f^{AR}_{m})$ & $Er(f^{SA}_{m})$ \\
\hline
$\sqrt{x}$ & $1.47\times 10^{-12}$ & $3.69\times 10^{-7}$&$2.44\times 10^{-5}$ \\
$\ln{x}$ & $3.38\times 10^{-12}$ &  $1.13\times 10^{-7}$& $2.84\times 10^{-4}$ \\
$exp(-\sqrt{x})$ &$1.79\times 10^{-11}$ &$2.12\times 10^{-8}$ & $2.38\times 10^{-4}$\\
 \hline
\end{tabular}
\end{table}
%\FloatBarrier

\noindent {\bf Example 5}
The matrix used in this example is a block diagonal with $2\times2$ blocks of the form
\[
\begin{bmatrix}
a_i & & c\\
-c &  &a_i
\end{bmatrix}
\]
where $c=1/2$ and $a_i=(2i-1)/(n+1)$ for $i=1,\ldots,n/2$ \cite{SAAD1}. Table $\ref{tab:table6}$ displays computed results, and shows that approximations computed with the GERA method have higher accuracy than approximations obtained by the RA and SGA methods.
\begin{table}[h!]
\caption{Example 5: $A\in\mathbb{R}^{n\times n}$ is a block diagonal matrix with $2\times 2$ blocks. $n=1000$ and block size $p=5$.}
\label{tab:table6}
\centering
\begin{tabular}{l|c|c|c}
\hline
 $f(x)$ & $Er(f^{ER}_{m/2})$ & $Er(f^{AR}_{m})$ & $Er(f^{SA}_{m})$ \\
\hline
$\sqrt{x}$ & $2.99\times 10^{-10}$ & $1.26\times 10^{-7}$&$5.64\times 10^{-4}$ \\
$\ln{x}$ & $7.04\times 10^{-10}$ &  $4.54\times 10^{-9}$& $8.2\times 10^{-3}$ \\
$exp(-\sqrt{x})$ &$5.38\times 10^{-10}$ &$4.53\times 10^{-9}$ & $5.56\times 10^{-4}$\\
 \hline
\end{tabular}
\end{table}
%\FloatBarrier
\section{Conclusion}\label{sec5}
This paper describes the global extended-rational Arnoldi method for the approximation of $f(A)V$ and for solving parameter dependent systems \eqref{SL}. We proposed an adaptive procedure to compute the shifts when $f(A)=e^{-tA}$ or $f(A)=(A-\sigma I_n)^{-1}$. The numerical results show that the proposed algorithms AGER (resGERA) require fewer iterations (number of restarts) and cpu-time as compared to other projection-type methods when approximating $f(A)V$  and when solving parameter dependent systems.

%\begin{acknowledgements}
%If you'd like to thank anyone, place your comments here
%and remove the percent signs.
%\end{acknowledgements}

\end{document}